\numberwithin{equation}{section}
\newcommand\res{\mathop{\hbox{\vrule height 7pt width .3pt depth 0pt
			\vrule height .3pt width 5pt depth 0pt}}\nolimits}
\newcommand{\ai}{\alpha}
\newcommand{\be}{\beta}
\newcommand{\Ga}{\Gamma}
\newcommand{\ga}{\gamma}
\newcommand{\e}{\epsilon}
\newcommand{\hm}{\mathcal{H}}
\newcommand{\lam}{\lambda}
\newcommand{\Om}{\Omega}
\newcommand{\fr}{\textnormal{FocalRad}}
\newcommand{\Si}{\Sigma}
\newcommand{\rh}{\rho}
\newcommand{\ta}{\theta}
\newcommand{\Ta}{\Theta}
\newcommand{\cms}{\operatorname{comass}}
\newcommand{\ms}{\mathbf{M}}
\newcommand{\cd}{\cdots}
\newcommand{\T}{\mathbf{T}}
\newcommand{\s}{\subset}
\newcommand{\cp}{^\complement}
\newcommand{\la}{\langle}
\newcommand{\ra}{\rangle}
\newcommand{\no}[1]{\left\lVert#1\right\rVert}
\newcommand{\nog}[1]{\left\lVert#1\right\rVert_g}
\DeclarePairedDelimiter{\ri}{\la}{\ra}
\newcommand{\spt}{\textnormal{spt}}
\newcommand{\m}{^{-1}}
\newcommand{\ts}{\otimes}
\newcommand{\pd}{\partial}
\newcommand{\na}{\nabla}
\newcommand{\R}{\mathbb{R}}
\newcommand{\Z}{\mathbb{Z}}
\theoremstyle{plain}
\newtheorem{thm}{Theorem}[section]
\newtheorem{lem}[thm]{Lemma}
\newtheorem{cor}{Corollary}
\theoremstyle{definition}
\newtheorem{defn}{Definition}[section]
\theoremstyle{remark}
\newtheorem{rem}{Remark}
\title[Asymptotic regularity of mod $v$ area-minimizers]{Homologically area-minimizing surfaces mod $v$ have at worst codimension 2 singular sets asymptotically}
\author{Zhenhua Liu}
\dedicatory{Dedicated to Xunjing Wei}
\begin{document}
	\maketitle\vspace{-3em}
	\begin{abstract}
		In \cite{DHMS}, De Lellis and coauthors have proved a sharp regularity theorem for area-minimizing currents in finite coefficient homology. They prove that area-minimizing mod $v$ currents are smooth outside of a singular set of codimension at least $1.$ Classical examples like triple junctions demonstrate that their result is sharp.
		
		Surprisingly, even though their regularity theorem cannot be improved for any fixed $v$, if one instead fixes the homology class, then $v$ asymptotically one can always achieve more regularity.
		For any integral homology class $[\Si]$ on any Riemannian manifold, we show that for $v$ large, any area-minimizing mod $v$ current in $[\Si\mod v]$ must be an integral current, thus having a singular set of codimension at least $2$ in general and of codimension at least $7$ in the hypersurface case. Similar results are obtained for Plateau problems in Euclidean space. Our work is inspired by Morgan's work \cite{FM4} and based on De Lellis' and coauthors' work \cite{DHMS}. 
	\end{abstract}
	\tableofcontents
	\section{Introduction}
	In this manuscript, area-minimizing surfaces refer to area-minimizing integral currents and mod $v$ currents for integers $v\ge 2$, which roughly speaking are submanifolds counted with multiplicity in $\Z$ or $\Z/v\Z$ coefficient, respectively, minimizing the area functional with homologous competitors. 
	
	For $\Z$-coefficient homology,  Almgren's Big Regularity Theorem (\cite{A}) and De Lellis-Spadaro-Minter-Skorobogatova (\cite{DS1}\cite{DS2}\cite{DS3}\cite{DMS}) show that $n$-dimensional area minimizing integral currents are smooth manifolds outside of a rectifiable singular set of dimension at most $n-2.$ (In the codimension 1 case, the dimension of the singular set can be reduced to $n-7$ by \cite{HF1}\cite{NV}.) 
	
	For $\Z/v\Z$-coefficient homology, De Lellis-Hirsch-Marchese-Stuvard (\cite{DHMS}) show that $n$-dimensional area minimizing integral currents are smooth manifolds outside of a singular set of dimension at most $n-1$. (In the hypersurface case, the singular set consists of transversely intersecting hypersurfaces along a common boundary outside of a set of codimension at least $2$ \cite{DHMSS}\cite{DHMSS2}\cite{MW}. This refined regularity theory however will not be needed in this manuscript. In the hypersurface case \cite{BW1} suffices for our purposes.) 
	
	Concrete examples like holomorphic subvarieties and triple junctions show that both the $\Z$ and the $\Z/v\Z$ regularity theorems above are sharp, respectively. Thus, it seems impossible that the dimension difference in regularity theorems between the $\Z$ and $\Z/v\Z$ coefficient can be bridged in general.
	
	However, pioneering work by Frank Morgan (\cite{FM4}) has shown that for boundaries with at most $v/2$ components on convex sets,  mod $v$ hypersurface solutions to Plateau problems must be integral, thus having a singular set of codimension at least $7$. To the author's knowledge, Morgan's theorem (\cite{FM4}) remains the only known instance in the literature about such unexpected improvement of regularity and is largely regarded as a special case by experts. To quote verbatim Minter-Wickramasekera (\cite{MW}),\\
	"\textbf{although
		in general such singularities in T obviously do arise, they can in certain special
		circumstances be ruled out; for instance, by a theorem of F. Morgan} \cite{FM4}"
	
	However, inspired by \cite{FM4} and in sharp contrast to the literature, we show that asymptotically we can always annihilate the difference in regularity between $\Z$ and $\Z/v\Z$ coefficient area-minimizers, in the setting of both homological area-minimizers and Plateau problems.
	\begin{thm}
		Let $[\Si]$ be a $d$-dimensional integral homology class on a $d+c$-dimensional smooth compact closed Riemannian manifold $M^{d+c}$, with $d,c\ge1.$ Then for $v$ large enough, any area-minimizing mod $v$ current in $[\Si\mod v]$ is smooth outside of a singular set of codimension at least $2$ for $c\ge 2$ and codimension at least $7$ for $c\ge 1.$
	\end{thm}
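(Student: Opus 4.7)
The plan is to show that for $v$ large enough (depending only on the homology class $[\Si]$ and the ambient manifold $M$), every area-minimizing mod $v$ current in $[\Si\mod v]$ must in fact be an integer current, and therefore inherit the classical codimension bounds for integral area-minimizers. The key observation, in the spirit of Morgan \cite{FM4}, is that mod $v$ multiplicities differ from their integer representatives by multiples of $v$, so a uniform density bound forces effectively integer behavior once $v$ exceeds twice the density.

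First, I would establish a uniform mass and density bound. Fix any integer area-minimizer $T_\star \in [\Si]$ and set $C_0 := \mathbf{M}(T_\star)$. Since the mod $v$ reduction of $T_\star$ is a legitimate competitor in $[\Si\mod v]$, any mod $v$ area-minimizer $T_v \in [\Si\mod v]$ satisfies $\mathbf{M}^v(T_v) \le C_0$ uniformly in $v$. Combined with the monotonicity formula for mod $v$ minimizers (available through \cite{DHMS}) and the compactness of $M$, this yields a uniform density bound $\Theta^d(\|T_v\|, x) \le D_0$ at every $x$, with $D_0 = D_0([\Si], M)$ independent of $v$.

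Next, I would argue by contradiction: assume there exist $v_k \to \infty$ and mod $v_k$ minimizers $T_{v_k} \in [\Si\mod v_k]$ with singular points $x_k$ violating the asserted codimension bound. Using the blow-up analysis of \cite{DHMS} (and \cite{BW1} in the hypersurface case), I extract subsequential mod $v_k$ tangent cones $C_k \subset \R^{d+c}$ of density at most $D_0$, each carrying a singular point at the origin of too high dimension. Passing to signed integer representatives of the multiplicities on $C_k$, all multiplicities have absolute value $\le 2D_0$, and once $2D_0 < v_k$ every mod $v_k$ cycle condition in the local decomposition of $C_k$ upgrades to a genuine integer cycle condition along the cone's singular axis. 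In particular, the integer lift $\widetilde{C}_k$ of $C_k$ is an integer cycle, not merely a mod $v_k$ cycle. Federer--Fleming compactness then yields a subsequential limit $C_\infty$; a direct competitor comparison (every integer cycle is also a mod $v_k$ competitor) shows $C_\infty$ is an integer area-minimizing cone in $\R^{d+c}$ still with a "bad" singular point at the origin.

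This contradicts the classical regularity theory for integer area-minimizing cones: codim $\ge 2$ singular set in general (\cite{A, DS1, DS2, DS3, DMS}) and $\ge 7$ for hypersurfaces (\cite{HF1, NV}). To close the loop, I need an $\varepsilon$-regularity statement propagating the regularity of the integer limit $C_\infty$ back to the mod $v_k$ minimizers $T_{v_k}$ near $x_k$; this is the main technical obstacle, since the standard $\varepsilon$-regularity of \cite{DHMS} is formulated for fixed $v$. The decisive leverage should be the bound $2D_0 \ll v_k$: at scales where $T_{v_k}$ is flat-close to $C_\infty$, any mod-$v_k$-only defect would cost at least $v_k$ in a local mass sense and is excluded by the uniform density bound, so Allard-type $\varepsilon$-regularity and the integer theory become effectively applicable. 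Making this uniformity rigorous, especially in higher codimension where the tangent-cone structure is less rigid than in the hypersurface case, will be the technical heart of the argument.
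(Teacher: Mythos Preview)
Your starting point is right: the uniform mass bound $\mathbf{M}^v(T_v)\le C_0$ and the resulting uniform density bound $\Theta\le D_0$ are exactly what the paper establishes first (its Lemma~\ref{dst}). The divergence begins immediately after. You set up a contradiction argument via blow-ups, a compactness limit $C_\infty$, and then an $\varepsilon$-regularity step to transfer regularity back to $T_{v_k}$; you correctly flag this last step as the technical heart, and you do not carry it out. That is a genuine gap: there is no off-the-shelf $\varepsilon$-regularity theorem uniform in $v$ that lets you conclude $T_{v_k}$ is regular near $x_k$ from the regularity of an integer limit cone, and your heuristic that ``a mod-$v_k$-only defect would cost at least $v_k$ in a local mass sense'' is not a proof.

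The paper bypasses this entirely by showing something stronger and more direct: once $v>2D_0$, the mod $v$ minimizer $T$ \emph{is} an integral current, not merely close to one. Take any rectifiable representative $S$ of $T$. One first checks (Lemma~\ref{dv2}) that $\spt\pd S$ cannot meet the regular part of $T$ where the density is $<v/2$, so $\spt\pd S$ lies in the singular set. Then one bounds the singular set itself: by Theorem~1.6 of \cite{DHMS} the flat-cone singular points with density $<v/2$ have codimension $\ge 2$, and any $(d-1)$-symmetric tangent cone is $\R^{d-1}$ times $v$ rays, hence has density exactly $v/2$ and is excluded. Almgren stratification then forces $\dim(\mathrm{Sing}\,T)\le d-2$. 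A $(d-1)$-dimensional flat chain supported in a set of $\mathcal{H}^{d-1}$-measure zero vanishes (Federer 4.1.20), so $\pd S=0$. Thus $S$ is an integral cycle and the classical Almgren/De Lellis--Spadaro (or Federer in codimension one) bounds apply directly. No compactness, no limit, no uniform $\varepsilon$-regularity is needed: the density bound $<v/2$ is used pointwise on $T$ itself, not on a sequence of blow-ups.
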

	The above theorem is a direct corollary of the following much stronger result.
	\begin{thm}\label{modvc}
		Assume
		\begin{itemize}
			\item
			$d,c\ge 1,v\ge 2$ are integers,
			\item $M^{d+c}$ is a compact closed (not necessarily orientable) smooth manifold,\item $S$ is a finite collection of $d$-dimensional integral homology classes on $M$,
			\item $K$ is a compact subset of the space of Riemannian metrics on $M.$
		\end{itemize}
		Then  there exists an open set $\Om_{K,S}$ in the space of Riemannian metrics with $K\s \Om_{K,S}$, and an integer $\nu_{K,S},$ such that,\begin{itemize}
			\item for any metric $g\in\Om_{K,S}$, any $v$ with $v\ge \nu_{K,S}$, any mod $v$ area-minimizing current $T$ in any homology class $[\Si\mod v] $ with $[\Si]\in S$ is an integral current, thus smooth outside of a singular set of dimension at most $d-2$ for $c\ge 2$  and at most $d-7$ for $c=1$,
		\end{itemize}
	\end{thm}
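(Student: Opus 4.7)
I would argue by contradiction and compactness. If Theorem~\ref{modvc} fails, one extracts sequences $g_k$ of metrics converging to some $g_\infty \in K$ (using compactness of $K$), integers $v_k \to \infty$, homology classes $[\Si_k] \in S$, and mod $v_k$ area-minimizers $T_k$ for $g_k$ in $[\Si_k \mod v_k]$ that are not integral currents. By finiteness of $S$, pass to a subsequence on which $[\Si_k] = [\Si]$ is constant, and fix an integral representative $\Si$ of this class. Using $\Si$ as a mod $v_k$ competitor yields the uniform mass bound $\ms_{g_k}(T_k) \leq \ms_{g_k}(\Si) \leq C$, independent of $k$.

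The heart of the argument is extracting from $\{T_k\}$ an integrally area-minimizing limit $T_\infty$ in $[\Si]$ for the metric $g_\infty$. Each $T_k$ has a canonical integral representative $\ti T_k$ with pointwise multiplicities in $(-v_k/2, v_k/2]$ and $\ms(\ti T_k)=\ms_{g_k}(T_k)$. Assuming integral flat compactness can be arranged, one extracts a subsequence $\ti T_k \to T_\infty$, and comparing $T_k$ against any integral competitor $T'$ in $[\Si]$ (which is automatically a mod $v_k$ competitor for $T_k$) and passing to the limit shows that $T_\infty$ is integrally area-minimizing for $g_\infty$. By Almgren--De~Lellis--Spadaro--Minter--Skorobogatova in codimension $\ge 2$ (and Federer--Hardt--Simon--Naber--Valtorta for $c=1$), $T_\infty$ has singular set of dimension at most $d-2$ (resp.\ $d-7$). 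On any compact subset of the regular part of $\spt T_\infty$, the $\e$-regularity theorem for mod $v$ minimizers (DHMS; BW1 for hypersurfaces) combined with the flat convergence upgrades to smooth convergence $\ti T_k \to T_\infty$, forcing the local multiplicities of $\ti T_k$ there to converge to the bounded integer multiplicities of $T_\infty$. Hence for $k$ large these multiplicities are strictly less than $v_k/2$, meaning $\ti T_k$ is an honest integral lift of $T_k$ on that compact piece. Since the complementary region sits in a neighborhood of a set of dimension at most $d-2$, a removable-singularities argument for integral currents produces a global integral lift of $T_k$, contradicting its assumed non-integrality. Openness of $\Om_{K,S}$ around each $g_\infty \in K$ is built in, since the argument only used $g_k \to g_\infty$ rather than $g_k \in K$.

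The main obstacle lies in the compactness step. Although $\ms(\ti T_k)$ is uniformly bounded, one only has $\partial \ti T_k \equiv 0 \pmod{v_k}$, so $\partial \ti T_k = v_k Q_k$ for some integral current $Q_k$ whose mass has no a priori bound independent of $v_k \to \infty$. Thus the integral flat compactness theorem does not apply to $\ti T_k$ directly. Circumventing this will require either (i) modifying $\ti T_k$ by subtracting an integral current (for instance, the filling supplied by the isoperimetric inequality applied to $T_k - \Si$) so that the resulting boundary is controlled independently of $v_k$, or (ii) passing to associated integer-rectifiable varifolds, whose masses and generalized mean curvatures (vanishing up to the ambient curvature, by area-minimization) are uniformly controlled, and then identifying the varifold limit with $|T_\infty|$ via the DHMS density bounds. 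Morgan's convexity argument in \cite{FM4} bypasses this issue cleanly in the Plateau setting; the homological extension pursued here will require a genuinely new ingredient at precisely this step.
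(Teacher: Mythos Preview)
Your approach takes a detour that introduces exactly the obstacle you identify, and the paper's proof avoids it entirely. The missing observation is that the monotonicity formula already gives a \emph{direct, pointwise} density bound, with no limit needed. Concretely (Lemma~\ref{mon}): for any stationary varifold $V$ in any metric $g'$ in a suitable neighborhood $\Om_g$, one has $\ta_p < C\,V(M)$ with $C$ depending only on the local geometry of $(M,g')$, uniformly over $\Om_g$, and \emph{independent of $v$}. Combining this with the mass bound $\ms^v(T)\le \ms(\Si)$ that you already derived gives a uniform density bound $\ta_p < \ta_{K,S}$ for every mod $v$ minimizer, every $v\ge 2$, every metric in $\Om_{K,S}$ (Lemma~\ref{dst}). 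Now simply take $\nu_{K,S} > 2\ta_{K,S}$: for any $v\ge \nu_{K,S}$ every point of $T$ has density strictly less than $v/2$.

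From here the argument is purely structural (Lemma~\ref{reg}): the boundary of any rectifiable representative $S$ avoids the regular set (Lemma~\ref{dv2}, since density $v/2$ never occurs), the $(d-1)$-symmetric tangent cones would have density exactly $v/2$ and hence are absent, and the flat-cone singular points with density $<v/2$ have codimension $\ge 2$ by Theorem~1.6 of \cite{DHMS}. Almgren stratification then forces $\spt\pd S$ into a set of Hausdorff dimension $\le d-2$, whence $\pd S=0$ by 4.1.20 of \cite{HF}. No sequence, no limit, no compactness of lifts.

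Your route, by contrast, tries to read off the structure of $T_k$ from the regularity of a limiting $T_\infty$, and is genuinely blocked at the compactness step you flag: there is no a~priori bound on $\ms(\pd\tilde T_k)$. Even granting compactness, your removable-singularities step is underspecified: bounded multiplicity on compacta in $\mathrm{Reg}(T_\infty)$ does not by itself control $\tilde T_k$ near the singular set of $T_k$, and you would still need to bound the dimension of the latter---which is precisely what the density argument above delivers directly. The ``genuinely new ingredient'' you anticipate is not needed; the monotonicity-to-density step is the whole proof.
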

	\begin{rem}
		Compact subsets in the space of Riemannian metrics come easily by taking finite-dimensional smooth families of Riemannian metrics. 
	\end{rem}
	\begin{rem}
		Note that the assumption of an integral $[\Si]$ before taking mod $v$ cannot be dropped. It is equivalent to saying that $[\Si\mod v]$ has integral representatives. By the universal coefficient theorem, there are indeed homology classes that can never be represented by integral currents if $H_{d-1}(M,\Z)$ has suitable nontrivial torsion, e.g., $M=\mathbb{RP}^{d+c}$.
	\end{rem}
	\begin{rem}
		We do not claim that $T\in [\Si]$. We only know that $T\in[\Si]+v[\Pi]$ for some integral homology class $[\Pi].$ When $H_d(M,\Z)$ is torsion-free, then it is indeed true that for $v$ large enough $T\in [\Si],$ with a proof similar to that of Lemma \ref{contm}. However, for $H_d(M,\Z)$ with torsion subgroups, concrete examples show that one can have $T\in[\Si]+v[\Pi]$ for $v\to\infty$ and possibly different $[\Pi].$
	\end{rem}
	A much stronger conclusion is true if we consider the set of all integral homology classes mod $v.$
	\begin{thm}\label{modva}
		With the same assumption as in Theorem \ref{modvc}, in any smooth metric on $M$, we have
		\begin{align*}
			\liminf_{v\to\infty}\frac{\#\{ H_d(M,\Z/v\Z)\ni[\Pi]\textnormal{ having only integral area-minimizers}\}}{\#\{ H_d(M,\Z/v\Z)\ni[\Pi]\textnormal{ admitting integral representatives}\}}>0.
		\end{align*}
		Here $\#$ means the number of elements in the set.
	\end{thm}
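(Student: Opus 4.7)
The strategy is to exhibit, for each large $v$, a family $\mathcal{C}_v$ of integral homology classes whose mod $v$ reductions are pairwise distinct in $H_d(M,\Z/v\Z)$, all of whose mod $v$ area-minimizers are integral, and whose size is a positive fraction of the denominator in Theorem \ref{modva}. Decompose $H_d(M,\Z)\cong\Z^r\oplus T$ with $T$ finite torsion, fix integral representatives $\Si_1,\dots,\Si_r$ of a free basis and $\tau_\alpha$ for each element of $T$, and set $A=\max_i\nog{\Si_i}$, $B=\max_\alpha\nog{\tau_\alpha}$. The denominator equals the size of the image of $H_d(M,\Z)\to H_d(M,\Z/v\Z)$, namely $|H_d(M,\Z)/vH_d(M,\Z)|=v^r\,|T/vT|$.

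The key ingredient is an effective form of Theorem \ref{modvc}, which I would extract together with Lemma \ref{contm} from the same compactness/semicontinuity machinery behind their proofs: there is a constant $C=C(M,g,d,c)$ such that whenever $[\Si]$ is an integral class admitting a representative with $\nog{\Si}\le v/C$, every mod $v$ area-minimizer in $[\Si\mod v]$ is an integral current. Heuristically this should hold because once the mass is small compared to $v$, the multiplicities cannot wrap around modulo $v$ and the minimizer canonically lifts to integer coefficients. Granted this, fix $L_v=\lfloor v/(2C(rA+B))\rfloor$ and set
\[
\mathcal{C}_v=\Big\{\sum_{i=1}^r a_i[\Si_i]+[\tau_\alpha]:a\in\Z^r,\ |a_i|\le L_v,\ [\tau_\alpha]\in T\Big\}.
\]
Each member admits an integral representative of mass at most $L_v\,rA+B\le v/C$, so by the effective criterion all of its mod $v$ area-minimizers are integral.

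Two classes in $\mathcal{C}_v$ reduce to the same mod $v$ class precisely when their coefficient difference lies in $v\Z^r\oplus vT$; the bound $|a_i|\le L_v<v/2$ forces equality of the free coordinates, so $\mathcal{C}_v$ contributes exactly $(2L_v+1)^r\,|T/vT|$ distinct classes to $H_d(M,\Z/v\Z)$. Dividing by the denominator $v^r\,|T/vT|$ gives a ratio $\bigl((2L_v+1)/v\bigr)^r\to(C(rA+B))^{-r}>0$ as $v\to\infty$, proving the $\liminf$ is positive. The principal obstacle is the effective mass-versus-$v$ threshold: Theorem \ref{modvc} as stated only delivers $\nu_{K,S}$ for a fixed finite $S$, so one must reopen its proof to show that $\nu_{K,S}$ can be taken to depend only on $\max_{[\Si]\in S}\nog{\Si}$, in a linear (or at any rate controllably quantitative) fashion. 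Once that is in hand, the rest is a fundamental-domain count inside $H_d(M,\Z)/vH_d(M,\Z)$.
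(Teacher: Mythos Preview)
Your approach is essentially the paper's own. The ``effective form'' you anticipate is precisely what the proof supplies: Lemma~\ref{mon} gives a universal density bound $\theta_p<C\,\ms^v(T)\le C\,\inf_\Z\ms([\Si])$, and Lemma~\ref{reg} says that $\theta_p<v/2$ everywhere forces the mod $v$ minimizer to be an integral cycle. Thus the quantitative threshold is exactly linear: $\inf_\Z\ms([\Si])<v/(2C)$ suffices. The paper then bounds $\inf_\Z\ms([\Si])$ in terms of $\max_j|\alpha_j|$ (via Lemma~\ref{msbd}), counts the lattice box $\max_j|\alpha_j|<v/(2Cb_d\sup_j\no{\Si_j})$, and divides by $v^{b_d}\cdot\#(G\otimes\Z/v\Z)$ --- the same fundamental-domain count you describe.

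One small correction: in the paper's notation $\nog{\cdot}$ is the \emph{stable norm} on real homology, which vanishes on torsion (so your $B=0$) and is a \emph{lower} bound on integral mass, not an upper bound. Your effective criterion should therefore read $\inf_\Z\ms([\Si])\le v/C$ rather than $\nog{\Si}\le v/C$; likewise $A$ should be $\max_i\inf_\Z\ms([\Si_i])$ (or the mass of a fixed representative), not $\max_i\nog{\Si_i}$. Since you in fact compute mass bounds for explicit representatives when forming $\mathcal{C}_v$, this is only a notational slip and the argument goes through. Lemma~\ref{contm} is not needed here.
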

	\begin{rem}
		By the universal coefficient theorem, the denominator above equals
		\begin{align*}
			\# H_d(M,\Z)\ts \Z/v\Z.
		\end{align*}
		Consequently $\# H_d(M,\Z)\ts \Z/v\Z\ge v^{b_d}\to\infty$ as $v\to\infty$, where $b_d$ is the $d$-th Betti number of $M$.
	\end{rem}
	For Plateau problems with boundaries, similar conclusions hold in Euclidean space.
	\begin{thm}\label{modvb} Suppose
		\begin{itemize}
			\item $d\ge 1,c\ge 1$ are integers,
			\item $\Ga=\sum_j t_j\Ga_j$ is a smooth integral current, with $t_j\in\Z,$ $\Ga_j\s \R^{d+c}$ a finite collection of pairwise disjoint connected orientable $d$-dimensional compact submanifolds of $\R^{d+c}.$
		\end{itemize}
		Then there is $\nu_\Ga>0,$ such that for $v\ge \nu_\Ga$, any area-minimizing mod $v$ current $T$ with $$\pd T=\Ga\mod v$$ is an integral current with boundary $\Ga$, thus smooth in the interior outside of a singular set of dimension at most $d-2$ for $c\ge 2$  and at most $d-7$ for $c=1$.
	\end{thm}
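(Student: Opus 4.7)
The plan is, for $v$ sufficiently large, to construct an integer representative $\widetilde{T}$ of $T$ satisfying $\partial \widetilde{T} = \Gamma$ exactly and $\mathbf{M}(\widetilde{T}) = \mathbf{M}^v(T)$. The mod-$v$ minimality of $T$ then transfers at once to $\widetilde{T}$: any integer competitor $\widetilde{T}'$ with $\partial \widetilde{T}' = \Gamma$ reduces mod $v$ to a mod-$v$ competitor of mass $\le \mathbf{M}(\widetilde{T}')$, so $\mathbf{M}(\widetilde{T}) = \mathbf{M}^v(T) \le \mathbf{M}(\widetilde{T}')$, making $\widetilde{T}$ an integer area-minimizer with boundary $\Gamma$. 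The stated codim-$2$ and codim-$7$ singular-set bounds are then immediate consequences of the classical integer regularity theorems quoted in the introduction.

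First I would collect uniform a priori bounds. A fixed integer filling $S$ of $\Gamma$ in $\R^{d+c}$ (which exists by contractibility, the $d=1$ degree condition being forced by mod-$v$ compatibility for $v$ large) gives $\mathbf{M}^v(T) \le \mathbf{M}(S)$ independently of $v$; the convex hull property confines $\spt(T)$ to a fixed compact set; and the monotonicity formula for mod-$v$ area-minimizers yields a uniform density bound $\theta(\|T\|, x) \le C_\Gamma$ depending only on $\Gamma$. Next I would pass to the canonical integer lift $\widetilde{T}$ of $T$ whose rectifiable multiplicity function takes values in $(-v/2, v/2]$. For $v > 2C_\Gamma$ the density bound prevents any mod-$v$ wraparound of densities, so this lift simply records the integer densities of $T$ on its rectifiable support, and hence $\mathbf{M}(\widetilde{T}) = \mathbf{M}^v(T)$.

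The main obstacle is to upgrade $\partial \widetilde{T} \equiv \Gamma \pmod{v}$ to $\partial \widetilde{T} = \Gamma$. A priori $\partial \widetilde{T} - \Gamma = vR$ for some integer $(d-1)$-cycle $R$ supported in $\mathrm{conv}(\spt \Gamma)$, and I must force $R = 0$. Using boundary regularity for mod-$v$ area-minimizers (Brakke–White \cite{BW1} in codimension $1$; interior structure of \cite{DHMS} combined with a slicing argument in higher codimension), $T$ near a smooth point of $\Gamma_j$ is a finite collection of $C^{1,\alpha}$ half-sheets whose signed multiplicities $m_i$ satisfy $\sum_i m_i \equiv t_j \pmod{v}$. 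The density bound controls both the number of sheets and each $|m_i|$, so $|\sum_i m_i| \le C_\Gamma^2$ uniformly in $v$; for $v > C_\Gamma^2 + \max_j |t_j|$ the only integer in this range congruent to $t_j$ modulo $v$ is $t_j$ itself. The same density argument at any interior junction-type singularity of $T$ (precisely where ``triple-junction''–type cancellation could a priori contribute $v$-multiples to $\partial \widetilde{T}$) bounds the local contribution in modulus by $C_\Gamma^2 < v$, forcing it to vanish. A covering argument over a neighborhood of $\spt \Gamma$ combined with this interior cancellation then yields $\partial \widetilde{T} = \Gamma$ globally, completing the construction and hence the theorem. The hard part is precisely this last step: quantifying the density estimate sharply enough, and combining it with the appropriate boundary-regularity input, to promote the mod-$v$ congruence of boundaries to an honest equality for $v$ large.
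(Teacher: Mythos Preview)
Your overall architecture matches the paper's: obtain a $v$-independent density bound via monotonicity and a fixed integral filling (this is exactly Lemma~\ref{dstb}), take a representative modulo $v$, and then argue that for $v$ large this representative is an honest integral current with boundary $\Gamma$ (Lemma~\ref{reg}). The transfer of minimality and the resulting regularity are fine.

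The gap is in your third step, where you promote $\partial\widetilde T\equiv\Gamma\pmod v$ to $\partial\widetilde T=\Gamma$. In the interior you only treat ``junction-type'' singularities, asserting that these are ``precisely where'' the lift could acquire extra boundary, and then invoke an unspecified covering argument. But a priori $\spt\partial\widetilde T$ lies in the entire singular set of $T$, which in higher codimension also contains flat (branch-type) singular points; nothing you have written rules out $\partial\widetilde T$ being carried there. The paper closes this as follows: once $\theta_p<v/2$ everywhere, the $(d-1)$-symmetric stratum is empty (such cones have density $v/2$), and Theorem~1.6 of \cite{DHMS} forces the flat singular points of density $<v/2$ to have Hausdorff codimension at least $2$; Almgren stratification then gives $\dim\operatorname{Sing}(T)\le d-2$, and Federer's support lemma (4.1.20 in \cite{HF}) kills any $(d-1)$-dimensional flat chain supported in a set of $\mathcal H^{d-1}$-measure zero. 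This is the missing mechanism in your interior step; without it your ``local contribution'' bound is not attached to anything, since $\partial\widetilde T$ is a current, not a pointwise quantity.

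At the boundary you invoke a $C^{1,\alpha}$ half-sheet decomposition for mod-$v$ minimizers, citing \cite{BW1} and ``\cite{DHMS} plus slicing''. No such boundary regularity is available in general codimension, and \cite{BW1} is an interior hypersurface result. The paper avoids this entirely: once the interior argument forces $\spt\partial S\subset\spt\Gamma$, the constancy theorem (4.1.31 in \cite{HF}) gives $\partial S=\sum_j s_j\Gamma_j$, and a tangent-cone count at $\mathcal H^{d-1}$-a.e.\ boundary point (rays in $\R^{d-1}\times\R$, with $a+b=k<v$ and $a-b=s_j=t_j+l_jv$) forces $-1<l_j<\tfrac12$, hence $l_j=0$. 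This needs only Almgren stratification at the boundary, not regularity. Your multiplicity estimate $|\sum m_i|\le C_\Gamma^2$ is the right heuristic, but you should realize it via tangent cones rather than an unproved sheet decomposition.
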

	\begin{rem}
		Theorem \ref{modva} and Theorem \ref{modvb} can be strengthened in the style of Theorem \ref{modvc} about smooth families of boundaries and smooth families of metrics.
	\end{rem}
	The above phenomenon is deeply connected to the Lavrentiev gap of the minimal area between integral and mod $v$ coefficient homology. To discuss these, we need some definitions.
	\begin{defn}\label{minm}
		With the same assumption as in Theorem \ref{modvc}, let $[\Si]$ be a $d$-dimensional integral homology class, define
		\begin{align*}
			\inf_\Z\ms_g ([\Si])&=\inf_{Z\in [\Si]}\ms_g(Z),\\
			\inf_{\Z}\ms_g ([\Si\mod v])&=\inf_{[\Pi]\in H_d(M,\Z)}\inf_\Z \ms_g( [\Si]+v[\Pi]),\\
			\inf_{\Z/v\Z}\ms_g([\Si\mod v])&=\inf_{Z\in[\Si\mod v]\in H_d(M,\Z/v\Z)}\ms_g^v(Z),\\
			\nog{[\Si]}&=\inf_{Z\in [\Si]\in H_d(M,\R)}\ms_g(Z).
		\end{align*}
	\end{defn}
	Here the subscript $g$ is the Riemannian metric and the superscript $v$ indicates mass mod $v$. The inclusion into $\Z,\Z/v\Z,\R$ coefficient homology means the natural map by universal coefficient theorem and the objects inside the homology classes are integral currents, mod $v$ currents, and real flat chains, respectively.
	
	To simplify notations, we will write $\nog{\Si}$ instead of $\nog{[\Si]}.$ Also, when there is no confusion, we will omit the subscript $g.$
	\begin{rem}
		The reader might wonder why we use $\nog{\cdot}$ instead of $\inf_\R\ms_g([\Si]).$ Federer (\cite{HF2}) proved that  $\nog{\cdot}$ is a well-defined continuous norm on the real homology groups of $M$ and this is a crucial fact we use many times. Details can be found in Section \ref{nm}.
	\end{rem}		
	First of all, by definition we have
	\begin{align}\label{lavi}
		\inf_\Z\ms_g([\Si\mod v])\ge
		\inf_{\Z/v\Z}\ms_g([\Si\mod v]).
	\end{align}
	\begin{defn}
		Define
		\begin{align*}
			K_{[\Si\mod v]}
		\end{align*} to be the set of metrics $g$ on $M$, so that $\inf_\Z\ms_g([\Si\mod v])=
		\inf_{\Z/v\Z}\ms_g([\Si\mod v]).$
	\end{defn}
	\begin{rem}
		When $d=1,v=2,$ by regularity of $1$-d minimizing currents (\cite{HF1}), equality always holds in (\ref{lavi}).\end{rem}
	We have
	\begin{cor}\label{setk}
		\begin{enumerate}
			\item $			K_{[\Si\mod v]}$ is always a (possibly empty) closed set,
			and	equality in (\ref{lavi}) holds if and only if some area-minimizing current mod $v$ in $[\Si\mod v]$ is integral.
			\item There exists $\nu>0,$ so that
			\begin{align*}
				\bigcap_{v\ge \nu}K_{[\Si\mod v]}
			\end{align*}is a closed set with a non-empty interior.
			\item \begin{align*}
				\bigcup_vK_{[\Si\mod v]}
			\end{align*}equals the set of all smooth Riemannian metrics on $M.$
		\end{enumerate}
	\end{cor}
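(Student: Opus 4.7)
The plan is to deduce (2) and (3) as short applications of Theorem \ref{modvc}, and to concentrate the main effort on the equivalence and closedness in (1).

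For part (1), I will prove the equivalence in both directions using an integral minimizing sequence. Because $[\Si]$ itself is integral, $[\Si\mod v]$ admits integral representatives, and one can take integral $T_n\in[\Si]+v[\Pi_n]$ with $\ms_g(T_n)\to\inf_\Z\ms_g([\Si\mod v])$. The uniform mass bound yields, via Federer--Fleming compactness together with the fact that integral classes of bounded $\nog{\cdot}$-norm form a finite union of cosets of the torsion subgroup in $H_d(M,\Z)$, an integral subsequential limit $T$ whose class is some fixed $[\Si]+v[\Pi]$ realizing $\inf_\Z\ms_g([\Si\mod v])$. When equality in (\ref{lavi}) holds, the chain
\begin{align*}
\ms^v_g(T)\le\ms_g(T)=\inf_\Z\ms_g([\Si\mod v])=\inf_{\Z/v\Z}\ms_g([\Si\mod v])
\end{align*}
forces $T$ to be a mod $v$ minimizer realized by an integral current. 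Conversely, if an integral current $\tilde T$ with multiplicities in $\{-\lfloor v/2\rfloor,\ldots,\lfloor v/2\rfloor\}$ represents a mod $v$ minimizer, then $\inf_\Z\ms_g([\Si\mod v])\le\ms_g(\tilde T)=\ms^v_g(\tilde T)=\inf_{\Z/v\Z}\ms_g([\Si\mod v])$, turning (\ref{lavi}) into equality. For closedness, I will establish continuity in $g$ of both $\inf_\Z\ms_g([\Si\mod v])$ and $\inf_{\Z/v\Z}\ms_g([\Si\mod v])$: upper semicontinuity by transporting almost-minimizers at $g$ to nearby metrics via continuity of the mass form, lower semicontinuity by extracting a subsequential limit from mass-bounded minimizers in the perturbed metrics. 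Being the equalizer of two continuous functions, $K_{[\Si\mod v]}$ is closed.

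For (2), I will apply Theorem \ref{modvc} with $S=\{[\Si]\}$ and any non-empty compact set $K$ of smooth metrics; this produces an open $\Om_{K,S}\supset K$ and an integer $\nu_{K,S}$ such that every mod $v$ area-minimizer in $[\Si\mod v]$ is integral whenever $g\in\Om_{K,S}$ and $v\ge\nu_{K,S}$. Combined with (1) this gives $\Om_{K,S}\s\bigcap_{v\ge\nu_{K,S}}K_{[\Si\mod v]}$, so the intersection has non-empty interior, and closedness is inherited from (1). For (3), the same argument applied with $K=\{g\}$ for an arbitrary fixed smooth metric produces some $v$ for which $g\in K_{[\Si\mod v]}$, so every smooth metric lies in the union.

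The main obstacle I foresee is the first half of (1): one must control the integral class of the limit so that it remains of the form $[\Si]+v[\Pi]$, ruling out that $[\Pi_n]$ escapes to infinity in $H_d(M,\Z)$. This uses the Federer norm $\nog{\cdot}$ recalled in Section \ref{nm}; an analogous lattice-finiteness argument also enters the closedness step when limits in the metric are taken.
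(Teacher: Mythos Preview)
Your proposal is correct and follows essentially the same route as the paper: the equivalence in (1) is obtained by showing that $\inf_\Z\ms_g([\Si\mod v])$ is actually attained on some class $[\Si]+v[\Pi]$ via the lattice-finiteness/compactness argument you describe (this is precisely the content of Lemma~\ref{contm}), closedness comes from continuity of both sides of (\ref{lavi}) in $g$, and (2)--(3) are immediate from Theorem~\ref{modvc}. The only cosmetic difference is that the paper deduces continuity of $\inf_\Z\ms_g([\Si\mod v])$ by reducing it, near each metric, to a minimum over finitely many classes (each giving a continuous function), whereas you argue upper and lower semicontinuity directly; both work and rely on the same finiteness input you flagged.
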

	We show that Theorem \ref{modvc} is sharp in the sense that $K_{[\Si\mod v]}\cp$ is non-empty in many cases as follows.
	\begin{thm}\label{lav}
		With the same assumption as in Theorem \ref{modvc}, suppose
		\begin{itemize}
			\item $c\ge d\ge 2,$ or $c=1,d\ge 2,$ $v$ is an integer with $v\equiv 2(\mod 4)$, and $R$ is a positive real number,
			\item $[\Si]$ is an integral homology class so that $2[\Si\mod v]=0.$
		\end{itemize}
		Then \begin{itemize}
			\item there exists a smooth Riemannian metric $g$ on $M$ so that
			\begin{align*}
				\frac{\inf_\Z\ms_g([\Si\mod v])
				}{\inf_{\Z/v\Z}\ms_g([\Si\mod v])}\ge R
			\end{align*}
			In other words, the minimum area among mod $v$ currents is arbitrarily small compared to integral currents, and $K_{[\Si\mod v]}\cp$ is a non-empty open set.
		\end{itemize}
	\end{thm}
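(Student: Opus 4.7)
The plan is to build $g$ by pinching the metric near a non-orientable representative of $[\Si\mod 2]$ and then verifying that no integral competitor can match the resulting small area. Write $v=2m$ with $m$ odd. Because $m$ is odd, the Chinese Remainder isomorphism $\Z/v\Z\is\Z/2\Z\times\Z/m\Z$ identifies the $2$-torsion subgroup of $H_d(M,\Z/v\Z)$ with $H_d(M,\Z/2\Z)$ through $\ai\mapsto m\ai$, so the hypothesis $2[\Si\mod v]=0$ yields $[\Si\mod v]=m\cdot[\Si\mod 2]$ inside $H_d(M,\Z/v\Z)$. Standard representability (Thom's theorem for $c\ge d\ge 2$, or Poincar\'e duality with $H^1(M,\Z/2\Z)$ in the codimension-one case) then produces a smooth, possibly non-orientable, embedded $d$-submanifold $N\s M$ with $[N\mod 2]=[\Si\mod 2]$. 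Because $m\equiv -m\pmod v$, the constant multiplicity $m$ on the non-orientable cycle $N$ gives a well-defined mod $v$ current $mN$ representing $[\Si\mod v]$.

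With $N$ chosen, I would deform a fixed background metric $g_0$ through a family $g_\e$ that agrees with $g_0$ outside a tubular neighborhood $U$ of $N$ and scales tangential directions inside $U$ so that the $g_\e$-volume of $N$ is at most $\e$. This immediately produces $\inf_{\Z/v\Z}\ms_{g_\e}([\Si\mod v])\le \ms_{g_\e}^v(mN)\le m\e$, which tends to zero as $\e\dto 0$.

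The delicate step is the complementary lower bound $\inf_\Z\ms_{g_\e}([\Si\mod v])\ge C$ for some $C>0$ independent of $\e$. Every integer representative of $[\Si]+v[\Pi]$ has mass at least $\no{[\Si]+v[\Pi]}_{g_\e}$ by definition of Federer's norm. The crucial structural observation is that $N$ is non-orientable, so $H_d(U,\R)=H_d(N,\R)=0$: the tubular neighborhood supports no nonzero real $d$-homology class. Consequently every real representative of a nonzero class in $H_d(M,\R)$ must carry a controlled amount of mass outside $U$, where the metric remains the fixed $g_0$. Under the genericity assumption that $[\Si]$ is not $v$ times an integer class modulo torsion, every member of the coset $[\Si]+vH_d(M,\Z)$ is nonzero in real homology, and a slicing/compactness argument on $\pd U$ then upgrades $H_d(U,\R)=0$ to a uniform lower bound $\no{[\Si]+v[\Pi]}_{g_\e}\ge C$ depending only on $g_0$ and the coset. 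The ratio is therefore at least $C/(m\e)$, exceeding $R$ once $\e$ is small enough, and openness of $K_{[\Si\mod v]}\cp$ follows from the closedness of $K_{[\Si\mod v]}$ in Corollary~\ref{setk}.

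The main obstacle is the $\e$-independent lower bound just sketched. First, when $[\Si]$ happens to be $v$-divisible modulo torsion, some $[\Si]+v[\Pi]$ vanishes in $H_d(M,\R)$ and Federer's norm alone is insufficient; I would supplement it with a torsion mass estimate obtained from a linking pairing with a dual integer cochain bounding a multiple of the class. Second, verifying that the pinching of $g_\e$ does not shrink Federer's norm on global classes requires the slicing argument referenced above, which trades the mass an integer cycle wastes inside $U$ for a bounded cost on $\pd U$ by exploiting $H_d(U,\R)=0$ to cap off the slice at controlled expense. I expect these two intertwined points to be the main technical work of the proof.
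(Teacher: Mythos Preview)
Your proposal shares the paper's central topological idea---represent $[\Si\mod v]$ by $\frac{v}{2}$ times a \emph{non-orientable} submanifold $N$, so that the tube $U$ around $N$ carries no nonzero integral $d$-homology and integral competitors are forced out of $U$---but it has two concrete gaps that the paper's argument avoids.

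First, you invoke Thom/Poincar\'e duality to produce $N$ and then simply assert ``the crucial structural observation is that $N$ is non-orientable.'' Representability alone does not give non-orientability; the generic output is orientable. The paper addresses this explicitly in Lemma~\ref{thom}: starting from an orientable representative, it performs a connected sum with an auxiliary non-orientable $d$-manifold (an embedded $\mathbb{RP}^2\times S^{d-2}$ when $c\ge 2$, or a self-connected-sum along a curve through both ends of $M\setminus N$ when $c=1$) to force non-orientability while preserving the mod~$v$ class. Without this step your argument collapses, since $H_d(U,\R)=0$ fails for orientable $N$.

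Second, and more seriously, your metric construction is \emph{pinching inside $U$} to make $N$ cheap, whereas the paper does the opposite: Lemma~\ref{retr} (using Zhang's constructions \cite{YZa}\cite{YZj}) builds a metric in which the nearest-point projection $\pi_Q^M$ is area-non-increasing on the tube and every stationary varifold not compactly contained in $U$ has mass at least $R\cdot\frac{v}{2}\ms(Q)$. This immediately yields both $\inf_{\Z/v\Z}\ms=\frac{v}{2}\ms(Q)$ (since $\frac{v}{2}Q$ is calibrated mod~$v$) and the lower bound on any integral minimizer (which cannot sit in $U$ by the non-orientability obstruction, hence is stationary outside $U$ and pays $\ge R\cdot\frac{v}{2}\ms(Q)$). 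By contrast, your pinching forces you to prove an $\e$-uniform lower bound on $\no{[\Si]+v[\Pi]}_{g_\e}$, which you correctly flag as the main obstacle. That bound is genuinely delicate: under tangential pinching, real (or integral) cycles can be deformed to carry almost all of their mass inside $U$, and the Federer norm can in principle degenerate as $\e\to 0$; the ``slicing/compactness'' fix you sketch would need to convert a boundary trace on $\pd U$ into a definite $g_0$-mass cost, and you have not carried this out. The paper's route sidesteps this difficulty entirely by making the complement expensive rather than the tube cheap.
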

	\begin{rem}
		Note that if the $d$-th Betti number of $M$ is non-zero, then such $[\Si]$ always exists.
	\end{rem}
	\begin{rem}
		Frank Morgan has communicated to the author the hypersurface case of the above theorem, and the credit goes to him for the proof of $c=1,d\ge 2$. Furthermore, Morgan has pointed out that on compact manifolds with $\Z/v\Z$ symmetry, one can construct mod $v$ area-minimizers that have mod $v$ junction-like singularities, i.e., non-empty codimension 1 strata. Thus, Theorem \ref{modvc} is indeed sharp.
	\end{rem}
	Similar result holds for Theorem \ref{modvb}.
	\begin{thm}\label{lavb}
		Let $d\ge 2,c\ge 1$ be integers. Then for any $v\ge 2,$ $R>0,$ there exists a smooth $d-1$-dimensional connected embedded compact oriented submanifold $\Ga$ of $\R^{d+c}$, so that
		\begin{align*}
			\frac{\inf\{\ms(T)|\pd T=\Ga\}}{\inf\{\ms^v(T)|\pd T=\Ga\mod v\}}\ge R.
		\end{align*}
	\end{thm}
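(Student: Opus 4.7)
The plan is to build $\Ga$ so that any integral filling must carry multiplicity $v$ at a central point, while mod $v$ that multiplicity collapses to $0$, giving a Lavrentiev gap parallel to the one in Theorem \ref{lav}. Fix a $d$-dimensional affine subspace $H\s\R^{d+c}$, a point $0\in H$, and parameters $R_0>0$, $0<\e\ll R_0$, $0<\de\ll\e$. In $H$ take the $v$ concentric $(d-1)$-spheres $S_i=\pd B_H(0,R_0+i\e)$ for $i=0,\dots,v-1$, each oriented as the boundary of its enclosed ball, and join them into a single smooth compact oriented embedded connected $(d-1)$-submanifold $\Ga$ by $v-1$ thin $(d-1)$-dimensional connected sums (``bridges'') of radius $\de$, placed at distinct angles so as to be pairwise disjoint (pushing each bridge slightly into $H^\perp$ when $c=1,d=2$ to ensure embeddedness). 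The bridges contribute area $O(v\de^{d-2}\e)$, which is $o(1)$ as $\de\to 0$.

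I then estimate both Plateau minima. For the mod-$v$ upper bound, set $T_0=\sum_{i=0}^{v-1}\cu{B_H(0,R_0+i\e)}$ together with small $d$-dimensional bridge fillings adjoined so that $\pd T_0=\Ga$. The multiplicity of $T_0$ equals $v-j$ on the $j$-th annular shell $\{R_0+(j-1)\e\le r<R_0+j\e\}$, with $j=0$ denoting the central ball $B_H(0,R_0)$. Reducing mod $v$, the center contributes $0$ and the $j$-th shell contributes mod-$v$ mass $\min(v-j,j)$ per unit volume, so
\begin{align*}
\inf\{\ms^v(T):\pd T=\Ga\mod v\}\le C_{d,v}R_0^{d-1}\e + o(1),
\end{align*}
where $C_{d,v}=d\om_d\sum_{j=1}^{v-1}\min(v-j,j)$. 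For the integral lower bound, let $\pi:\R^{d+c}\to H$ be orthogonal projection, which is $1$-Lipschitz. For any integral $T$ with $\pd T=\Ga$, the pushforward $\pi_*T$ is a top-dimensional integral current in $H$ with $\pd(\pi_*T)=\pi_*\Ga$, and $\ms(\pi_*T)\le\ms(T)$. Such a current is determined by a compactly supported $\Z$-valued multiplicity function $u$, which is forced to be the winding number of $\pi_*\Ga$ about each point; this equals $v$ on (most of) $B_H(0,R_0)$ and drops by $1$ with each outward sphere crossing, giving
\begin{align*}
\ms(T)\ge\int_H|u|\,dx\ge v\om_d R_0^d - o(1).
\end{align*}
Dividing, the ratio is at least $v\om_d R_0/(C_{d,v}\e)+o(1)$, which exceeds any prescribed $R$ once $\e$ and $\de/\e$ are chosen small enough.

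The main obstacle is the geometric bookkeeping required to realize $\Ga$ as a \emph{single} smooth connected embedded oriented $(d-1)$-submanifold: the $v-1$ bridges must be executed as smooth orientation-preserving $(d-1)$-dimensional connected sums of controlled diameter, in such a way that their contributions both to the winding function of $\pi_*\Ga$ in $H$ (appearing in the integral lower bound) and to the mod-$v$ fill mass stay as the $o(1)$ corrections claimed above. This is a standard smooth-surgery argument and is not expected to present any serious difficulty; the remaining ingredients---the projection plus winding-number lower bound on the integral minimum and the explicit mod-$v$ upper bound from the nested-balls current---are elementary.
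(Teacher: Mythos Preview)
Your argument is correct and complete up to the routine surgery bookkeeping you flag at the end, but the construction is genuinely different from the paper's. The paper takes $v$ \emph{parallel translates} of a single unit sphere $S^{d-1}\subset\R^d$ in a normal direction $e_{d+1}$, connects them by thin necks into a single $\Ga$, and observes that $\Ga-vS^{d-1}=\pd D$ with $\ms(D)=O(\e)$. The mod-$v$ upper bound then comes for free because $\pd D=\Ga\mod v$, and the integral lower bound reduces immediately to the classical fact that the minimizer with boundary $vS^{d-1}$ is $v$ copies of the unit disk. You instead take $v$ \emph{concentric} spheres in a single $d$-plane $H$, so that the obvious filling carries multiplicity $v$ on the central ball; your mod-$v$ upper bound comes from the annular shells, and your integral lower bound from the $1$-Lipschitz projection to $H$ plus a winding-number computation.

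What each buys: the paper's route is shorter---once $\Ga-vS^{d-1}=\pd D$ is in hand, both bounds are one-liners invoking the disk as calibrated minimizer, and no projection or degree argument is needed. Your route is more self-contained on the lower bound (the projection argument actually \emph{proves} the disk is minimizing rather than citing it) and makes the Lavrentiev mechanism---multiplicity $v$ collapsing to $0$ mod $v$---visible pointwise. The surgery obstacle you mention (embedded, oriented connected sums with controlled homological effect) is exactly the same one the paper faces and handles in the same breezy way; for the concentric configuration you additionally need the bridges pushed into $H^\perp$ (which you note) to resolve the orientation/embedding incompatibility that arises when both spheres have outward normals pointing the same way along the radial direction, but since the codimension of $\Ga$ in $\R^{d+c}$ is $c+1\ge 2$ this is indeed standard.
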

	The above results can be compared with Federer's work \cite{HF2} and the author's work \cite{ZLc} on the Lavrentiev gap between $\Z$ coefficient and $\R$ coefficient minimal mass, which is more prevalent. Also, compared to Theorem \ref{modvc}, it is not expected that real coefficient mass minimizers in general lie in integral homology.
	\subsection{Sketch of proof}Consider any mod $v$ area-minimizer in $[\Si].$ Take any of its representing lifts to rectifiable currents, denoted by $T$.
	The key observation is that the monotonicity formula gives a priori density upper bounds on mod $v$ minimizers, forcing every point to have a density smaller than $v/2$ for $v$ large. This strict density bound has three consequences: 
	\begin{enumerate}
		\item the boundary of $T$ is supported in the singular set, 
		\item the codimension $1$ strata of the singular set is empty, 
		\item as per Theorem 1.6 of \cite{DHM}, the top strata singular points have codimension at least two. 
	\end{enumerate}These observations, combined with Almgren's stratification of singular sets, imply that the boundary of the rectifiable lift, $T$, has at least codimension $2$.   By classical properties of integral flat chains, this means $T$ must be an integral cycle.
	
	A more quantitative approach to these arguments proves Theorem \ref{modvc} and \ref{modva}.  Corollary \ref{setk} follows directly from these results. For the Plateau problems in Euclidean space (Theorem \ref{modvb}), the method is analogous. Theorem \ref{lav} leverages Frank Morgan's calibration modulo $v$ (\cite{FM4}) and Yongsheng Zhang's constructions (\cite{YZa}\cite{YZj}), while Theorem \ref{lavb} is based on a simple perturbation of $v$ multiples of a $d-1$ dimensional unit sphere.
	\section*{Acknowledgements}
	I cannot thank my advisor Professor Camillo De Lellis enough for his unwavering support while I have been recovering from illness. I feel so lucky that I have Camillo as my advisor. Many thanks go to him for countless helpful suggestions regarding this manuscript and others. The author also wants to thank Professor Frank Morgan for his constant support and pioneering work that has inspired many constructions in the author's works. Many thanks go to him for countless helpful conversations. A special thank goes to Professor Bruno Martelli, whose Mathoverflow answer \cite{BM} is essential to the $c=1$ case of Theorem \ref{lav}. \section{Preliminaries and Lemmas}
	In this section, we will fix our notations and prove some basic lemmas.
	\subsection{Manifolds and neighborhoods}\label{mfd}
	We will reserve $M$ to denote an ambient smooth compact closed Riemannian manifold. Submanifolds will be denoted by $N,L,Q$, etc. We will use the following sets of definitions and notations.
	\newcommand{\inj}{\textnormal{Inj}}
	\begin{defn}\label{defns}
		Let $N$ be a smooth submanifold of a Riemannian manifold $(M,g).$
		\begin{itemize}
			\item $B_r^{(M,g)}(N)$ denotes a tubular neighborhood of $N$ inside $M$ in the intrinsic metric on $M$ of radius $r$. When there is no a priori choice of metric, then use an arbitrary metric.
			\item $\inj_g(M)$ denotes the injectivity radius of $M$ in metric $g.$
			\item $\T_p M$ denotes the tangent space to $M$ at $p.$ We will often regard $\T_p N$ as a subspace of $\T_p M.$
			\item $\exp^\perp_{N\s M}$ denotes the normal bundle exponential map of the inclusion $N\s M.$ 
			\item $\fr_N^M$ is the focal radius of $N$ in $M,$ i.e., the radius below which the normal bundle exponential map remains injective.
			\item $\pi_N^M$ denotes the nearest distance/normal bundle exponential map projection from $M$ to $N$ in the metric intrinsic of $M$ in $B_r^N(M)$ with $r$ less than the focal radius of $N$ inside $M$.
		\end{itemize}   
	\end{defn}
	\begin{rem}
		We will often drop the sup/subscripts $M,N$ when there is no confusion.
	\end{rem}
	\subsection{Algebraic topology}\label{at}The main reference is \cite{AH}. For a compact manifold $M,$
	$H_d(M,\Z)$ is a finitely generated abelian group. We say a class is torsion if some multiple of it is zero, and call a class non-torsion otherwise.
	
	We fix a decomposition of the homology group as follows:
	\begin{itemize}
		\item  ${[\Si_1],\cd,[\Si_b],[\tau_1],\cd,[\tau_a]}$ is a $\Z$-linear independent generating set of $H_d(M,\Z)$ over $\Z,$
		\item all $[\Si_j]$ are non-torsion and all $[\tau_j]$ are torsion,
		\item $N=\max_j\deg[\tau_j],$ where the degree is the least positive integer which multiplies the torsion class to zero.
	\end{itemize}
	The reader should be familiar with the universal coefficient theorem, which will remain fundamental to all of our discussions. The natural map from integral to mod $v$ homology will be denoted by $[\Si]\mapsto[\Si\mod v].$\subsection{Integral currents and flat chains}\label{cfc}
	In this manuscript, we need several notions of currents. For a comprehensive introduction, the standard references are \cite{LS1} and \cite{HF}.  For our purposes, the reader suffices to know the following from 4.1.24 in \cite{HF}
	\begin{itemize}
		\item $d$-dimensional rectifiable currents are finite mass limits of the Lipschitz image of oriented polyhedron chains under flat topology.
		\item $d$-dimensional integral currents are $d$-dimensional rectifiable currents with boundary being $d-1$-dimensional rectifiable currents.
		\item $d$-dimensional integral flat chains decompose as sum of $d$-dimensional rectifiable currents and boundary of $d+1$-dimensional rectifiable currents. 
	\end{itemize}
	By 4.2.16 of \cite{HF}, integral currents are rectifiable currents with finite boundary mass. Rectifiable currents are integral flat chains with finite mass.
	We will use the following definition of the irreducibility of currents.
	\begin{defn}\label{irr}
		A closed integral current $T$ is irreducible in $U,$ if we cannot write $T=S+W,$ with $\pd S=\pd W=0,$ and $S,W$ nonzero, and $\ms(T)=\ms(S)+\ms(W).$
	\end{defn}
	\begin{rem}
		It is easy to see that the definition of irreducibility above is equivalent to 1.c.i of Section 2.7 of \cite{ZL1}, if we assume the other assumptions in that Section.
	\end{rem}
	Often we will abuse the notations by using $T$ to denote $\spt T$. Also, we will use the differential geometry convention of closedness. An integral current $T$ is closed if $\pd T=0$ and $T$ has compact support. 
	\subsection{Mod $v$ currents}
	By mod $v$ currents, we mean flat chains modulo $v$. The main reference is Section 4.2.26 of \cite{HF1}, \cite{RY} and \cite{DHMS}. Roughly speaking, they are the closure of polyhedron chains mod $v$ under mod $v$ flat topology. For our purposes, it suffices to know that they form a complete set under mod $v$ flat topology (page 424 of \cite{HF1}), and the chain complex formed by them induces precisely the homology with mod $v$ coefficient  (4.4.5 of \cite{HF1}). 
	
	There is a natural map from integral flat chains to flat chains mod $v$, i.e., taking its mod $v$ equivalence class (\cite{RY}, 4.2.26 of \cite{HF}). We will use $T\mapsto T\mod v$ to denote this map.
	
	Conversely, every mod $v$ current $T$ of finite mass has at least one representative modulo $v$ (page 430 of \cite{HF}), i.e., a rectifiable current $S$ with the same induced mod $v$ mass measure and in the same mod $v$ equivalence class as $T.$
	
	Throughout the manuscript, we will speak of mod $v$ currents that are integral currents. The definition is as follows.
	\begin{defn}Suppose
		\begin{itemize}
			\item $T$ is a mod $v$ current, $\Ga$ is an integral current,
			\item $\pd T=\Ga\mod v.$
		\end{itemize}	We say $T$ is an integral current if
		\begin{itemize}
			\item at least one representative modulo $v$, say $S$, of $T$ is an integral current,
			\item $\spt\pd S\s\spt\Ga.$
		\end{itemize}
	\end{defn}
	\subsection{Mass and comass}
	The comass of a $d$-dimensional differential form $\phi$ in a metric $g$ is defined as
	\begin{align*}
		\cms_g \phi=\sup_x\sup_{P\s\T_xM}\phi(\frac{P}{|P|_g}),
	\end{align*}where $P$ ranges over $d$-dimensional oriented planes in the tangent space to $M.$ 
	
	The mass of an integral current is defined as
	\begin{align*}
		\ms_g(T)=\sup_{\cms_g(\phi)\le 1}T(\phi).
	\end{align*}
	
	The definition of mass mod $v$ is more complicated, but roughly
	$$	\ms^v(T)
	$$ equals the mass of the least mass of rectifiable currents in the same modulo class (page 430 of \cite{HF}).
	\subsection{Representing homology classes by submanifolds}
	In this section, we will collect several facts about representing homology classes using submanifolds.
	\begin{lem}\label{nono}
		For $d\ge 2,c\ge 2,$ there exists a connected embedded $d$-dimensional non-orientable submanifold $L^d$ in $\R^{d+c}.$
	\end{lem}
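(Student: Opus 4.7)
The plan is to exhibit $L^d$ as a product of a M\"obius band with a sphere. Fix the standard embedding of the M\"obius band $\mathbb{M}^2 \hookrightarrow \R^3$ (an embedded smooth $2$-dimensional non-orientable submanifold with boundary). For $d \ge 3$, let $S^{d-2} \subset \R^{d-1}$ denote the unit sphere and consider
\begin{align*}
    L := \mathbb{M}^2 \times S^{d-2} \subset \R^3 \times \R^{d-1} = \R^{d+2}.
\end{align*}
This is an embedded smooth $d$-dimensional submanifold. Since $\mathbb{M}^2$ is connected and $S^{d-2}$ is connected (because $d-2 \ge 1$), the product $L$ is connected. Since orientability of a Cartesian product of manifolds is equivalent to orientability of both factors, $L$ inherits non-orientability from $\mathbb{M}^2$.

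For $d = 2$ the product construction fails because $S^0$ consists of two disjoint points, making $\mathbb{M}^2 \times S^0$ disconnected. In this case I would simply take $L := \mathbb{M}^2 \subset \R^3$, which is already a connected non-orientable $2$-dimensional submanifold.

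In either case $L$ lies inside an affine $\R^{d+2}$, and since $c \ge 2$ the conclusion follows by composing with the standard affine inclusion $\R^{d+2} \hookrightarrow \R^{d+c}$. The only mild subtlety is the separate handling of $d = 2$; once that is noted, no genuine obstacle arises.
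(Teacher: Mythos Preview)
Your approach parallels the paper's: both take a non-orientable surface, cross it with $S^{d-2}$, and then include $\R^{d+2}\hookrightarrow\R^{d+c}$. The paper, however, uses the \emph{closed} surface $\mathbb{RP}^2\subset\R^4$ together with a twisted map
\[
u(x_1,\dots,x_4,y_1,\dots,y_{d-1})=\bigl((4+x_1)y_1,\dots,(4+x_1)y_{d-1},x_2,x_3,x_4\bigr)
\]
to realize $\mathbb{RP}^2\times S^{d-2}$ inside $\R^{d+2}$ (the naive product embedding would only land in $\R^{4}\times\R^{d-1}=\R^{d+3}$). Your choice of the M\"obius band $\mathbb{M}^2\subset\R^3$ sidesteps this twisting entirely, since $3+(d-1)=d+2$ on the nose; that is the genuine simplification your route buys.

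The trade-off is that your $L$ carries nonempty boundary $\partial L\cong S^1\times S^{d-2}$ (and for $d=2$ your $L=\mathbb{M}^2$ itself has boundary $S^1$). The lemma as literally stated does not say ``closed'', so on its face your argument is defensible. But the sole use of this lemma in the paper is in Lemma~\ref{thom}, where one forms the connected sum $Q=N\#L$ and needs $Q$ to be a \emph{closed} submanifold so that $\frac{v}{2}Q$ is a mod~$v$ cycle; this breaks if $L$ has boundary. Upgrading your construction to a closed non-orientable surface forces you out of $\R^3$ into $\R^4$, at which point the direct product no longer fits in $\R^{d+2}$ and you are pushed back to a device like the paper's map $u$ to retain codimension~$2$.
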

	\begin{proof}
		It is well known that $\mathbb{RP}^2$ embeds into the unit ball in $\R^4,$ e.g., \cite{GL}. Consider the following map $
		u:\R^4\times \R^{d-1}\to \R^{d+2}$ defined by
		\begin{align*}
			u(x_1,\cd,x_4,y_1,\cd,y_{d-1})=((4+x_1)y_1,\cd,(4+x_{1})y_{d-1},x_2,x_3,x_4).	
		\end{align*}
		Straightforward calculation shows that $u$ is an embedding restricted to $B_2^4(0)\times S^{d-2}.$ Thus, $u(\mathbb{RP}^2\times S^{d-2})$ is an embedding of $\mathbb{RP}^2\times S^{d-2}$ into $\R^{d+2}$. Note that $\mathbb{RP}^2\times S^{d-2}$ is not orientable. (Suppose not. Equip the product with a product metric. Interior product of the volume form on the sphere factor with the volume form on the product gives a volume form on $\mathbb{RP}^2,$ a contradiction.) Now consider any standard embedding $\R^{d+2}\s\R^{d+c}$ and compose it with $u.$ We are done.
	\end{proof}
	\begin{lem}\label{thom}
		With the assumptions in Theorem \ref{lavb}, there exists a closed connected non-orientable smooth submanifold $Q$ of $M,$ so that 
		\begin{align*}
			[\Si\mod v]=[\frac{v}{2}Q\mod v].
		\end{align*}
	\end{lem}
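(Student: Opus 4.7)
The plan is to use the Chinese Remainder Theorem to reduce the claim to a mod $2$ problem, then invoke Thom's theorem to produce a smooth representative, and finally adjust it to be connected and non-orientable. Since $v\equiv 2\pmod 4$, I would write $v=2m$ with $m$ odd, so that $\Z/v\Z\is\Z/2\Z\op\Z/m\Z$ with $v/2$ corresponding to $(1,0)$; by the universal coefficient theorem this induces a splitting $H_d(M,\Z/v\Z)\is H_d(M,\Z/2\Z)\op H_d(M,\Z/m\Z)$ under which the projection to the first summand is ordinary mod $2$ reduction. Because $m$ is odd, multiplication by $2$ is an automorphism of the $\Z/m$-summand, so the hypothesis $2[\Si\mod v]=0$ forces the $\Z/m$-component of $[\Si\mod v]$ to vanish, and its $\Z/2$-component is $[\Si\mod 2]$.

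I would next verify that, although $Q$ is non-orientable, the object $(v/2)Q$ is a well-defined mod $v$ cycle: any local orientation is determined only up to sign, but $\pm v/2$ are congruent mod $v$, so the chain is unambiguous, and its only potential boundary is supported on the orientation-reversing locus with multiplicity $v\equiv 0\pmod v$. Equivalently, $(v/2)Q$ is the image of $[Q\mod 2]\in H_d(M,\Z/2\Z)$ under the inclusion $\Z/2\Z\hookrightarrow\Z/v\Z$, $1\mapsto v/2$. Under the CRT splitting, $[(v/2)Q\mod v]$ corresponds to $([Q\mod 2],0)$, so the desired identity $[\Si\mod v]=[(v/2)Q\mod v]$ reduces to the mod $2$ equality $[Q\mod 2]=[\Si\mod 2]$ in $H_d(M,\Z/2\Z)$.

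It thus suffices to construct a connected closed non-orientable smooth submanifold $Q^d\s M$ with $[Q\mod 2]=[\Si\mod 2]$. Thom's classical theorem provides some smooth closed embedded representative $Q_0$; since $\dim M=d+c>d$, I would join its components by pairwise disjoint smooth embedded tubes to obtain a connected representative $Q_1$ in the same mod $2$ class. To enforce non-orientability without changing $[Q_1\mod 2]$, in the case $c\ge 2$ I would apply Lemma \ref{nono} to embed a connected non-orientable $L^d$ inside a small coordinate ball $B\s M$; such an $L$ is null-homologous in $M$, so the interior connected sum $Q:=Q_1\# L$ (along a thin tube disjoint from the rest of $Q_1$) is connected, non-orientable, and still represents $[\Si\mod 2]$.

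The main obstacle is the hypersurface case $c=1$, where no closed non-orientable $d$-manifold embeds in a Euclidean ball $B\s M$ and Lemma \ref{nono} is unavailable. Here I would adapt the construction from Martelli's MathOverflow answer \cite{BM}: perform a cross-cap type surgery on $Q_1$ along a carefully chosen loop in $M$ so as to make its normal line bundle non-trivial while ensuring, via the Whitney sum identity $w_1(TQ_1)=w_1(TM|_{Q_1})+w_1(\nu)$, that the resulting intrinsic first Stiefel--Whitney class is non-zero; this produces a connected non-orientable hypersurface representing $[\Si\mod 2]$, completing the argument.
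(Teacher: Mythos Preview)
Your reduction via the Chinese Remainder Theorem is correct and is genuinely different from the paper's argument. The paper never passes to $\Z/2$ coefficients; instead it invokes the Thom--Wall result that some \emph{odd} multiple $\lambda[\Si]$ admits an embedded oriented integral representative $N$, then uses that $\lambda\cdot\tfrac{v}{2}$ is odd together with $2[\Si\mod v]=0$ to conclude $[\tfrac{v}{2}N\mod v]=[\Si\mod v]$, and finally connect-sums $N$ with a non-orientable $L$ from Lemma~\ref{nono}. Your CRT observation that $v/2\mapsto(1,0)$ under $\Z/v\Z\cong\Z/2\Z\oplus\Z/m\Z$ collapses the whole question to producing a non-orientable mod~$2$ representative of $[\Si\mod 2]$, which is conceptually cleaner and avoids the detour through integral representability and odd multiples. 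Both routes then finish the $c\ge 2$ case identically, by connect-summing with $L$ inside a ball.

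The only place your sketch is thinner than the paper is the hypersurface case $c=1$. The paper does not perform a ``cross-cap type surgery along a loop''; it uses Meeks--Patrusky \cite{WM} to pick a connected oriented hypersurface $N$ representing a primitive integral class with $M\setminus N$ \emph{connected} (one component with two ends), and then follows Martelli \cite{BM} by running an arc in $M\setminus N$ from one side of $N$ to the other and performing a self-connected-sum along that arc. Your Stiefel--Whitney discussion is suggestive but does not by itself produce an embedded modification, and you do not explain why the ambient complement is connected, which is exactly what makes the Martelli tube possible. In your framework this is easy to supply: if your connected mod~$2$ representative $Q_1$ is already non-orientable you are done; if it is orientable and two-sided, then $M\setminus Q_1$ disconnected would force $[Q_1\mod 2]=0$ (it would bound one of the components), contradicting $[\Si\mod 2]\ne 0$, so $M\setminus Q_1$ is connected and the Martelli self-connected-sum goes through. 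With that one sentence added, your $c=1$ argument matches the paper's in content while reaching it from the mod~$2$ side rather than from a primitive integral lift.
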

	\begin{proof}
		Let us deal with the case of $c\ge d\ge 2$ first. As mentioned in the second paragraph of \cite{BD}, \cite{CW1} and \cite{RT} show that for any integral homology class $[\Si]$ with $d,c\ge 1$, there is an odd integer $\lam$ so that $\lam[\Si]$ can be represented by continuous maps from smooth manifolds. See also \cite{VB} for sharp bounds on $\lam$. By Whitney's approximation theorem (Theorem 6.26 of \cite{JL}) and the denseness of embeddings in the mapping space (Theorem 4.7.7 of \cite{CW}) when $c>d,$ we can get a smoothly embedded submanifold $N$ representing $\lam[\Si].$ When $c=d,$ the above argument gives an immersed representative of $\lam[\Si]$ with transverse double points only. However, one can always replace the double points with necks to get a smoothly embedded representative of $\lam[\Si]$ as well.		
		
		To sum it up, for some odd number $\lam$, $\lam[\Si]$ can be represented by an embedded orientable smooth submanifold $N$. Since the codimension is larger than $1$, using connected sums, one can assume that $N$ is connected (detailed argument in the proof of Lemma 2.1 in \cite{ZLc}).  Since $\lam,\frac{v}{2}$ is odd and $[2\Si\mod v]=0$, we deduce that
		\begin{align*}
			[\frac{v}{2}N\mod v]=[\frac{v}{2}\lam\Si\mod v]=[\Si\mod v].
		\end{align*}
		Pick a point $p\in N,$ and a small coordinate  $(x_1,\cd,x_{d+c})$ ball $B_r(p)$ around $p$ so that $N$ becomes the $x_1,\cd,x_d$-plane. By Lemma \ref{nono}, there exists a non-orientable manifold $L^d$ that can be embedded into $B_r(p)\cap\{x_{d+c}>0\},$ since the latter is diffeomorphic to $d+c$-dimensional standard ball. Pick a point $q$ on $L$ and take a curve $\ga$ from $p$ to $q$ that is disjoint from $N,L$ except at the endpoints. Make a tubular neighborhood of $\ga$ into a neck and make a connected sum of $N\#L$. Note that $N\# L$ is not orientable, otherwise, $L$ minus a small disk is orientable. Then adding the disk will keep orientability, a contradiction.
		
		Set $Q=N\#L$. Then we claim that $[\frac{v}{2}Q\mod v]=[\frac{v}{2}N\mod v]=[\Si\mod v]$.
		
		To see this, let us show that $[\frac{v}{2}L\mod v]$ is a boundary. Note that  $\frac{v}{2}L$ is indeed a mod $v$ cycle. Consider $\mathbb{RP}^2$ as an oriented chain with boundary $2\mathbb{RP}^1$ and take the chain structure to $L=\mathbb{RP}^2\times S^{d-2}.$ Since the $d$-dimensional mod $v$ homology of $B_r(p)$ is trivial by homotopy invariance, we deduce that $\frac{v}{2}L$ is indeed a mod $v$ boundary.
		
		The necks we use to do connected sums are tubular neighborhoods of $\ga.$ By taking a continuous family of ambient diffeomorphisms that shrink a neighborhood around $\ga$ to $\ga$, we deduce that $[\frac{v}{2}Q]=[\frac{v}{2}N]+[\frac{v}{2}L]$ by homotopy invariance (\cite{AH}). We are done with the case $c\ge d\ge 2$.
		
		For the case of $c=1,d\ge 2,$ by Corollary 3.28 of \cite{AH}, $H_d(M,\Z)\equiv \Z^{b_d},$ with $b_d$ the $d$-th Betti number of $M.$ It is straightforward to check that our assumptions imply that $[\Si\mod v]=\frac{v}{2}[\Pi\mod v]$ for some primitive homology class $\Pi\in H_d(M,\Z).$ By point 2 of Representation Theorem of \cite{WM}, $[\Pi]$ can be represented by a connected closed smooth hypersurface $N$ and $M\setminus N$ has one path component with two ends. Instead of doing connected sums with $L,$ which does not exist for $c=1,$ we do a self-connected sum to destroy the orientation of $N,$ as pointed out by \cite{BM}. Since $N\setminus M$ has only one path component, take a smooth curve in $M\setminus N$ to connect its two ends. Do a connected sum of $N$ along $\ga$ with itself. Call the result $Q.$ It is straightforward to check that $Q$ is unorientable and $Q$ is homologous to $N$ as mod $2$ chains. Now apply the same argument as in the case of $c\ge d\ge 2,$ we deduce that $[\Si\mod v]=[\frac{v}{2}Q\mod v].$ We are done.  \end{proof}
	\begin{lem}\label{retr}
		With the same assumptions as in Theorem \ref{modvc}, suppose
		\begin{itemize}
			\item $[\Pi]\in H_d(M,\Z/v\Z)$ is a mod $v$ homology class with $v\ge 2$,
			\item there is a smooth connected not necessarily orientable manifold $Q$  and an integer $0<t\le \frac{v}{2},$
			\item $[\Pi]=[tQ].$
		\end{itemize}
		Then for any $R>1,$	 there exists a smooth metric $g$, and a neighborhood $U$ around $Q,$ such that\begin{itemize}
			\item $U$ deformation retracts onto $Q$,
			\item $tQ$ is homologically mod $v$ area-minimizing in $(M,g)$,
			\item any stationary varifold that is not compactly contained in $U$ must have mass at least $Rt\ms(Q)$.
		\end{itemize}
	\end{lem}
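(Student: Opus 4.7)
The strategy is to pick $g$ so that a tubular neighborhood of $Q$ becomes a warped product in which the normal-exponential projection is a $1$-Lipschitz retraction, while the complement is rescaled by a large factor $\lambda^2$. Fix a background metric $g_0$ on $M$ and let $V$ be the tubular neighborhood of $Q$ of $g_0$-radius strictly less than $\fr_Q^M$, so that $\pi:=\pi_Q^M:V\to Q$ is a smooth deformation retraction. Let $U\s V$ be a tubular subneighborhood with $\ov U\s V$. Using Fermi coordinates along $Q$, I take $g$ on $V$ to be the Riemannian submersion metric making $\pi:(V,g)\to(Q,g_0|_Q)$ a submersion with fibers of $g_0$-size $\le\e$ for a small fixed parameter $\e$; this forces $\pi|_{\ov U}$ to be $1$-Lipschitz in $g$. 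On $M\setminus V$ I set $g=\lambda^2 g_0$ for a large parameter $\lambda$, with smooth interpolation in $V\setminus U$. Property (a) is immediate from the tubular neighborhood construction.

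For (b), extend $\pi$ to a Lipschitz map $\ti\pi:M\to Q$ that equals $\pi$ on $\ov U$, is constant equal to a fixed point $q_0\in Q$ on $M\setminus V$, and interpolates smoothly in between. For $\lambda$ large, all $g$-distances on $M\setminus U$ are blown up so much that $\ti\pi$ is globally $1$-Lipschitz in $g$. Given a mod $v$ competitor $T'$ representing $[tQ\mod v]\in H_d(M,\Z/v\Z)$, the pushforward $\ti\pi_{\ast}T'$ is a mod $v$ $d$-cycle on $Q$ lying in $[tQ\mod v]\in H_d(Q,\Z/v\Z)$. A direct simplicial chain computation shows that every mod $v$ $d$-cycle on $Q$ has constant coefficient $c\in\Z/v\Z$ on a fundamental chain: in the orientable case this comes from $H_d(Q,\Z)=\Z$, while in the non-orientable case the chain relations across orientation-reversing seams force $2c\equiv 0\mod v$, so combined with $0<t\le v/2$ and $2t\equiv 0\mod v$ we get $c=t=v/2$. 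In either case $\ms^v_g(\ti\pi_{\ast}T')=t\ms_g(Q)$, and since $\ti\pi$ is $1$-Lipschitz, $\ms^v_g(T')\ge\ms^v_g(\ti\pi_{\ast}T')=t\ms_g(Q)$, proving the mod $v$ minimality of $tQ$.

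For (c), after slightly enlarging $U$ if necessary to absorb the transition annulus $V\setminus U$ (while keeping $U$ a tubular neighborhood of $Q$ on which $\ti\pi$ is still $1$-Lipschitz and which still deformation retracts onto $Q$), I may assume $M\setminus U$ lies entirely in the region where $g=\lambda^2 g_0$. Any stationary $d$-varifold $V'$ in $(M,g)$ with support not compactly contained in $U$ has some $p\in\spt V'\setminus U$; at $p$ the $g$-injectivity radius is at least $c_1\lambda$ and all $g$-sectional curvatures are $O(\lambda^{-2})$, so the standard monotonicity formula for stationary varifolds in Riemannian manifolds yields $\ms_g(V')\ge\ms_g(V'\cap B^g_{c_1\lambda}(p))\ge c_2\lambda^d$. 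Choosing $\lambda$ so that $c_2\lambda^d\ge Rt\ms_g(Q)$ completes (c). The main technical obstacle is the simultaneous construction of $g$ and $\ti\pi$ satisfying all three properties, especially the bookkeeping across the transition annulus; the freedom in the choice of $U$ (absorbing the annulus into $U$) handles this cleanly, since neither the retraction onto $Q$ nor the $1$-Lipschitz property of $\ti\pi$ is disturbed by a small enlargement.
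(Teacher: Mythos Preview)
The gap is in part (b): your global map $\tilde\pi:M\to Q$ generally does not exist. Gluing $\pi$ on $\ov U$ to the constant $q_0$ on $M\setminus V$ across the collar $V\setminus U\cong\pd U\times[0,1]$ requires the normal sphere bundle projection $\pi|_{\pd U}:\pd U\to Q$ to be null-homotopic. This fails already for $M=S^d\times S^c$ and $Q=S^d\times\{\mathrm{pt}\}$: the normal bundle is trivial, $\pd U\cong S^d\times S^{c-1}$, and the restriction of $\pi$ to any slice $S^d\times\{\mathrm{pt}\}\subset\pd U$ is the identity on $S^d$, hence not null-homotopic. In many situations there is no continuous retraction $M\to Q$ whatsoever, so no rescaling of the metric will make your comparison map available. (Your homological bookkeeping downstream of $\tilde\pi$ is fine; it is the existence of $\tilde\pi$ that breaks.)

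The paper circumvents this by reversing the logical order of (b) and (c). It first builds the metric satisfying (c), via a conformal factor following Zhang's construction, so that any stationary varifold not compactly contained in $U$ has mass at least $Rt\ms(Q)$. It then uses (c) to trap any mod $v$ minimizer $T$ inside $U$: since $tQ$ is a competitor one has $\ms^v(T)\le t\ms(Q)<Rt\ms(Q)$, so the stationary varifold underlying $T$ cannot leave $U$. Once $\spt T\s U$, only the \emph{local} area-non-increasing retraction $\pi:U\to Q$ is needed, and from there the argument is exactly yours: push forward to $Q$, invoke constancy to identify $\pi_\ast T$ with $tQ$, and read off $\ms^v(T)\ge t\ms(Q)$. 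Your metric idea and your proof of (c) via monotonicity in the $\lambda^2 g_0$ region are both sound; the fix is simply to use (c) to confine the minimizer \emph{before} retracting, rather than to globalize the retraction.
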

	\begin{proof}
		First, equip $M$ with an arbitrary smooth metric $h.$ Then take $U=B_{\frac{r}{3}}(Q)$ with any chosen $r<\textnormal{FocalRad}_Q^M.$ By construction, $U$ deformation retracts onto $Q.$
		
		Even if $Q$ is not orientable, $\no{(\pi_Q^M)dvol_Q}$ is a well-defined smooth function in $B_{r}(Q),$ where $dvol_Q$ is any locally chosen volume form of $Q.$
		
		By Remark 3.5 of \cite{YZa} and Lemma 5.1 of \cite{ZL1}, we deduce that $\pi_Q^M$ is an area-non-increasing projection in the metric $\no{(\pi_Q^M)dvol_Q}^{\frac{2}{d}}h.$
		
		For any $R> 1,$ using the proof of Theorem 4.1 in \cite{YZj}, there exists a smooth function $f$, so that $f$ equals $1$ near $Q,$ and in $fh,$ $\pi_Q^{(M,h)}$ is still area-non-increasing in $B_r^h(Q).$ Moreover, any stationary varifold in $(M,fh)$ that is not compactly contained in $B_{\frac{r}{3}}^h(Q)$ has area at least $R\ms_h(tQ)$.
		
		We claim that $tQ$ is mod $v$ area-minimizing in $(M,fh)$. To see this, suppose $T$ is an area-minimizing current mod $v$ in $[\Si]=[tQ].$ Then by construction, $T$ must be contained in $B_{\frac{r}{3}}^h(Q),$ otherwise having more area than $tQ.$ However, if $T$ is contained in $B_{\frac{r}{3}}(Q),$ then 
		\begin{align*}
			\ms^v(\pi_Q^{(M,h)}(T))\le\ms^v(T),
		\end{align*}thus $\pi_Q^{(M,h)}(T)$ must also be area-minimizing mod $v.$
		Since $\pi_N^M$ is a deformation retract, we deduce that $\pi_Q^M(T)$ is supported in $Q$ and quals to $tQ\mod v.$ Since $1\le t\le \frac{v}{2},$ we must have $\ms^v(\pi_Q^M(T))=\ms(tQ).$ We are done.
	\end{proof}
	\subsection{A priori density bounds}
	In this subsection, we will collect several a priori density bounds about mod $v$ currents.
	\begin{lem}\label{mon}
		With the same assumptions as in Theorem \ref{modvc}, let $g$ be a smooth Riemannian metric on $M.$
		Then there exist an open set of smooth Riemannian metrics $\Om_g$ which contains $g$, and constants $c_g,r_g>0$,  such that,	\begin{itemize}
			\item for any $d$-dimensional stationary integral varifold $V$ on $M$ in metric $g'\in \Om_g$, any point $p\in M$ and  radii $r\in(0,r_g],$ the function
			\begin{align*}
				\exp(c_gr)V(B_r^{g'}(p))r^{-d}
			\end{align*} is monotonically increasing in $r$, where $V(B_r^{g'}(p))$ is the measure $V$ of the radius $r$ ball centered at $p$ in $g'.$
			\item the density $\ta_p$ of $V$ stationary in any $g'\in \Om_g,$ at any point $p\in M$ is well-defined and there is a constant $C>0$, so that 
			\begin{align*}
				\ta_p< C V(M).
			\end{align*}
		\end{itemize}
	\end{lem}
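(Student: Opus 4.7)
The plan is to deduce this as a direct consequence of the classical Riemannian monotonicity formula for stationary varifolds, after verifying that the constants which appear depend only on two-sided sectional curvature bounds and a lower bound on the injectivity radius. Both of these quantities vary continuously with the metric in the $C^2$-topology, and that is what yields the local uniformity in $g$ required by the statement.

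First, I would equip the space of smooth Riemannian metrics with the $C^2$-topology and take $\Om_g$ to be a sufficiently small $C^2$-neighborhood of $g$. By continuity of curvature and of the injectivity radius in this topology, there will exist constants $\kappa_g,\iota_g>0$ such that for every $g'\in\Om_g$ the sectional curvature of $g'$ is bounded in absolute value by $\kappa_g$ and $\inj_{g'}(M)\ge\iota_g$. I would then set $r_g:=\iota_g/2$. For any $p\in M$ and $g'\in\Om_g$, on the geodesic ball $B^{g'}_{r_g}(p)$ the squared distance function $\rho(x)=\tfrac12\,\dis_{g'}(p,x)^2$ is smooth, and Hessian comparison gives $|\mathrm{Hess}_{g'}\rho - g'|\le C\kappa_g r^2$, where $r=\dis_{g'}(p,x)$ and $C$ is a universal constant.

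Next I would run the standard first variation computation. Testing stationarity of $V$ against a cutoff radial vector field $X=\phi(r)\na_{g'}\rho$, letting $\phi$ approximate the indicator $\chi_{[0,r]}$, and using the Hessian estimate above to control the error $\mathrm{tr}_V\mathrm{Hess}_{g'}\rho - d$, I would obtain the differential inequality
\begin{equation*}
\frac{d}{dr}\!\left[\frac{V(B_r^{g'}(p))}{r^d}\right]\ge -c_g\,\frac{V(B_r^{g'}(p))}{r^d},\qquad r\in(0,r_g],
\end{equation*}
with a constant $c_g>0$ depending only on $\kappa_g$. Integration then yields that $\exp(c_g r)\,V(B_r^{g'}(p))\,r^{-d}$ is non-decreasing on $(0,r_g]$, as claimed in the first bullet.

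The second bullet should then be immediate: the monotone quantity has a finite limit as $r\downarrow 0$ which, up to the factor $\omega_d$, is precisely the density $\ta_p$; evaluating at $r=r_g$ gives
\begin{equation*}
\ta_p\le \frac{\exp(c_g r_g)}{\omega_d\,r_g^d}\,V(B_{r_g}^{g'}(p))\le \frac{\exp(c_g r_g)}{\omega_d\,r_g^d}\,V(M),
\end{equation*}
which is the desired bound with $C=\exp(c_g r_g)/(\omega_d r_g^d)$. The hardest part of this plan, if one can call it that, is checking that $\kappa_g,\iota_g,c_g,r_g$ can all be chosen uniform on a genuine open neighborhood of $g$; this is a standard continuity statement in Riemannian geometry and the only place where the precise topology on the space of metrics intervenes.
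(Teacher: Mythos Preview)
Your proposal is correct and follows essentially the same route as the paper: test stationarity against a radial cutoff field, control the deviation of $\mathrm{tr}_V\mathrm{Hess}\,\rho$ from $d$ by local geometry, integrate the resulting differential inequality to get the $\exp(c_g r)$-weighted monotonicity, and then read off the density bound at $r=r_g$. The paper packages the geometric input slightly differently---working in normal coordinates and bounding $|\langle \nabla_e(r\nabla r),e\rangle-1|$ by a Taylor expansion controlled by the $C^3$ norm of the metric, rather than invoking Hessian comparison with a sectional curvature bound---and it cites Ehrlich explicitly for the continuity of the injectivity radius, but these are cosmetic differences rather than a different argument.
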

	\begin{proof}
		This is folklore and we will only give a sketch of the proof. First consider everything in the metric $g,$ for any point $p\in M$ and $\rh=\frac{1}{2}\inj_g(M),$ adopt a normal coordinate $(x_1,\cd,x_{d+c})$ in $B_\rh(p)$. Let $r$ denote the distance to $p$ then by Chapter 2 in \cite{AG} we have 
		\begin{align*}
			r\na r=\sum_j x_j\pd_j.
		\end{align*}
		A straightforward calculation using Taylor expansion shows that 
		\begin{align*}
			|\ri{\na e(r\na r),e}-1|\le c,
		\end{align*}
		where $e$ is any unit length tangent vector in $B_\rh(p).$ Here the constant $c$ depends is controlled by the second derivatives of the metric.
		
		Arguing as in the first two sections of \cite{D}, we deduce that $\exp(cr)V(B_r^g(p))r^{-d}$ is monotonically increasing in $r$.
		
		Now if one varies the point $p$, since $c$ is controlled by $\no{g}_{C^3},$ by the compactness of $M,$ we deduce the existence of a uniform $c$ that works for all $p.$ Note that all of the above is independent of $V.$
		
		By Section 8 Theorem in \cite{PE}, $\inj_g(M)$ depends continuously on the metric. Thus, there exists an open neighborhood $\Om_g$ in the space of Riemannian metrics so that $g\in\Om_g$ and $r_g=\frac{1}{2}\inf_{g'\in \Om_g}\inj_{g'}(M)>0.$ However for each $g'\in \Om_g$, running the above argument gives a monotonicity formula with a constant $c$ that is controlled by $C^3$ norm of the metric. Thus, by shrinking $\Om_g$ if necessary, we can get a uniform upper bound $c_r$. Since the product of non-negative monotonically increasing functions preserves the monotonicity, we are done with the first two claims. 
		
		The last claim follows from the first two points and \begin{align}
			\ta_p=\lim_{r\to 0}\frac{V(B_r^{g'}(p))}{r^d}\le \exp(c_gr_g)V(B_{r_g}^{g'}(p))\le CV(M).
		\end{align}   
	\end{proof}
	\begin{lem}\label{dst}
		With the same assumptions as in Theorem \ref{modvc}, there exists an open set $\Om_{K,S}$ in the space of Riemannian metrics with $K\s \Om_{K,S}$, and a real number $\ta_{K,S}$ such that
		\begin{itemize}
			\item for any metric $g\in\Om_{K,S}$, any $v$ with $v\ge 2$, any area-minimizing mod $v$ current $T$  in any homology class $[\Si\mod v] $ with $[\Si]\in S$ has density at most $\ta_{K,S}$ at any point.
		\end{itemize}
	\end{lem}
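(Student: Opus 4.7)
The plan is to combine the density-from-mass bound of Lemma \ref{mon} with a uniform upper bound on the mod $v$ mass of any area-minimizing candidate. Once both pieces are in place, the density bound is immediate from the monotonicity formula: a uniform ceiling on $\ms^v_g(T)$ together with a uniform positive lower bound on the monotonicity radius forces $\ta_p \le \exp(c_g r_g)\,\ms^v_g(T)/r_g^d$ to be uniformly bounded.

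First, I would pick once and for all, for each of the finitely many classes $[\Si]\in S$, a fixed rectifiable integral representative $Z_{[\Si]}\in[\Si]$ (such representatives exist because the classes in $S$ are, by hypothesis, integral). Because the mass functional depends continuously on the metric and the representatives $Z_{[\Si]}$ are fixed, the map
\begin{align*}
	g\mapsto \max_{[\Si]\in S}\ms_g(Z_{[\Si]})
\end{align*}
is continuous on the space of Riemannian metrics. Since $K$ is compact, this map is bounded on $K$, hence bounded by some constant $M_S$ on a slightly larger open neighborhood $\Om'_{K,S}\supset K$.

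Next, for every $g\in K$, Lemma \ref{mon} furnishes an open neighborhood $\Om_g$, radii $r_g>0$, and constants $c_g,C_g>0$ such that every stationary integral varifold $V$ in any metric $g'\in\Om_g$ satisfies $\ta_p(V)\le C_g\,V(M)$ at every point. By compactness of $K$ I extract a finite subcover $\{\Om_{g_i}\}_{i=1}^N$, and set
\begin{align*}
	\Om_{K,S}:=\Om'_{K,S}\cap\bigcup_{i=1}^N\Om_{g_i},\qquad C:=\max_{1\le i\le N}C_{g_i}.
\end{align*}
Then $\Om_{K,S}$ is an open neighborhood of $K$ and every stationary integral varifold in any metric $g\in\Om_{K,S}$ satisfies $\ta_p\le C\,V(M)$ at every $p$.

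Finally, for any $g\in\Om_{K,S}$, any $v\ge 2$, and any area-minimizing mod $v$ current $T$ in $[\Si\mod v]$ with $[\Si]\in S$, the competitor $Z_{[\Si]}\mod v$ yields
\begin{align*}
	\ms^v_g(T)\le \ms_g(Z_{[\Si]})\le M_S.
\end{align*}
The integer rectifiable varifold associated to any rectifiable representative modulo $v$ of $T$ is stationary for the area functional (a standard fact about mod $v$ area-minimizers, cf.\ \cite{HF1} and \cite{DHMS}) and has total mass equal to $\ms^v_g(T)\le M_S$. Applying the density bound in Lemma \ref{mon} to this varifold gives $\ta_p(T)\le C M_S$ at every $p$, so the choice $\ta_{K,S}:=CM_S$ works. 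The only conceptual point to verify is that the varifold lift of a mod $v$ minimizer is stationary in the usual sense; this is standard and involves no serious obstacle, with the remainder being a routine compactness argument.
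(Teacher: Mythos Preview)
Your argument is correct and follows essentially the same route as the paper: bound density by total mass via the monotonicity formula of Lemma \ref{mon}, bound total mass by comparison with an integral competitor, and use compactness of $K$ together with continuity in the metric to make all constants uniform. The only cosmetic difference is that you fix a single integral representative $Z_{[\Si]}$ up front (so only the continuity of $g\mapsto\ms_g(Z_{[\Si]})$ is needed), whereas the paper compares with the area-minimizing integral current in $[\Si]$ and invokes continuity of $g\mapsto\inf_\Z\ms_g([\Si])$; your choice is marginally simpler but the structure of the proof is the same.
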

	\begin{proof}
		First, it suffices to deal with the case $S=\{[\Si]\},$ $K=\{g\}$, i.e., both having only one element. To see this, suppose we have already proved the lemma for the one-element-only case. Then by finiteness of $S$, setting
		$\Om_{g,S}=\cap_{[\Si]\in S}\Om_{g,[\Si]}$ and $\ta_{g,S}=\max_{[\Si]\in S}\ta_{g,S}$ proves the case for $K=g$ and $S$ arbitrary. Now, $\cup_{g\in K}\Om_{g,S}$ forms an open cover of $K,$ so we can choose a finite subcover $K\s \cup_{j=1}^k\Om_{g_j,S}.$ Finally, setting $\Om_{K,S}=\cup_{j=1}^n\Om_{g_j,S}$ and $\ta_{K,S}=\max_{j=1}^n\ta_{g_j,S}$ and we are done.
		
		With the reduction in hand, it suffices to prove the one-element-only case.
		
		Apply Lemma \ref{mon} to deduce that there exists an open set of Riemannian metrics $\Om_g\supset g,$ so that for any stationary varifold $V$ in any metric $g'\in \Om_g$, for any point $p,$ we have
		\begin{align}\label{dbd}
			\ta_p< CV(M).
		\end{align}
		Let $T$ be an area-minimizing mod $v$ current in $[\Si\mod v]$ in metric $g'\in \Om_g$, by (\ref{dbd}) we deduce that $$\ta_p<C\ms^v(T).$$ Let $S$ be an area-minimizing integral current in $[\Si].$ Then
		\begin{align*}
			\ms^v(T)\le\ms^v(S)\le\ms (S).
		\end{align*}
		Thus, we have \begin{align}\label{denbd}
			\ta_p<C\ms(S),
		\end{align}
		However, $\ms(S)$, the minimal integral area in $[\Si]$, depends continuously on the metric. Thus, by shrinking the $\Om_g$ if necessary, we can deduce a uniform upper bound. Since $C$ in Lemma \ref{mon} is independent of the metric in $\Om_g$, we are done.  
	\end{proof}
	\begin{lem}\label{dstb}
		With the same assumptions as in Theorem \ref{modvb}, there exists $\ta>0$ so that for any $v\ge 2,$ any mod $v$ area-minimizing current $T$ with $\pd T=\Ga\mod v$, the density of $T$ at any point is smaller than $\ta.$
	\end{lem}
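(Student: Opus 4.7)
The plan is to follow the strategy of Lemma \ref{dst}, combining a uniform mass bound with the Euclidean monotonicity formula, but with additional care required near the smooth boundary $\Ga$.

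First, I would establish a $v$-independent mass bound. Fix once and for all an integral Plateau solution $S$ for $\Ga$; then $\ms^v(T)\le\ms^v(S)\le\ms(S)=:M_0$, independent of $v$. Since $\Ga$ is compactly supported in some ball $B_R(0)$, replacing $T$ by its composition with the nearest-point projection onto $\overline{B_R(0)}$ is area-non-increasing and preserves the boundary mod $v$, so we may assume $\spt(T)\s\overline{B_R(0)}$.

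Next, I would fix a scale $r_\Ga>0$ smaller than the focal radius of $\Ga$ in $\R^{d+c}$ and small enough that each $\Ga_j$ is $C^1$-close to its tangent planes within balls of radius $r_\Ga$. For any point $p$ with $\dis(p,\Ga)\ge r_\Ga$, the exact Euclidean monotonicity formula applied on $B_{r_\Ga}(p)$, which does not meet $\spt\pd T$, gives $\ta_p\le M_0/(\om_d r_\Ga^d)$, where $\om_d$ denotes the volume of the unit $d$-ball.

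For points $p$ with $\dis(p,\Ga)<r_\Ga$ I would invoke a boundary monotonicity argument. The crucial observation is that the multiplicity of each $\Ga_j$ in $\pd T=\Ga\mod v$ admits a representative $\tau_j\in(-v/2,v/2]$ with $|\tau_j|\le|t_j|$ for every $v\ge 2$; hence the boundary multiplicities are uniformly bounded by $M_\Ga:=\max_j|t_j|$, independently of $v$. Comparing $T$ in a small ball $B_\rh(p)$ with a competitor obtained by replacing $T\res B_\rh(p)$ with $|\tau_j|$ copies of a half-disk bounded by the piece of $\Ga_j$ inside (or, for $p$ slightly off of $\Ga$, with a cone over $T\cap\pd B_\rh(p)$ augmented by such a patch to match $\pd T\mod v$), the minimality of $T$ yields $\ms(T\res B_\rh(p))\le\tfrac12 M_\Ga\om_d\rh^d(1+o(1))$ as $\rh\to 0$. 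The resulting boundary monotonicity identity then produces a uniform bound $\ta_p\le M_\Ga/2+O(r_\Ga)$ in this neighborhood.

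Taking $\ta$ to be the larger of $M_0/(\om_d r_\Ga^d)$ and $M_\Ga/2+O(r_\Ga)$ concludes the proof. The main obstacle is verifying the boundary monotonicity formula with the required $v$-independent uniformity: one must confirm that the curvature corrections coming from the embedding of $\Ga$, together with the half-disk comparison error terms, depend only on $\Ga$ (through its second fundamental form and focal radius) and not on $v$ or $T$, so that they can be absorbed at the fixed scale $r_\Ga$. This is standard given the smoothness and compactness of each $\Ga_j$ and the uniform bound $|\tau_j|\le M_\Ga$.
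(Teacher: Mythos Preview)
Your overall strategy---uniform mass bound plus monotonicity at interior and boundary scales---is correct, but your treatment near $\Ga$ diverges from the paper's and is somewhat rougher than it needs to be.

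The paper never tracks the boundary multiplicities $\tau_j$ at all. Instead it observes that the underlying varifold of $T$ is stationary in $\R^{d+c}\setminus\Ga$ and invokes Allard's boundary monotonicity for stationary varifolds (\cite{WA2}): there exist $s,c>0$ depending only on the smooth compact submanifold $\Ga$ so that $\exp(cr)\,V_T(B_r(p))\,r^{-d}$ is nondecreasing for $p\in\Ga$ and $r\le s$. This immediately gives $\ta_p\le \exp(cs)s^{-d}\ms(S)$ on $\Ga$, with no competitor construction and no dependence on $v$. For points $p$ at distance $\le s/2$ from $\Ga$ but not on $\Ga$, the paper uses a neat two-step estimate: interior monotonicity on $B_{|p-q|}(p)$ (where $q$ is the nearest point of $\Ga$), then the inclusion $B_{|p-q|}(p)\s B_{2|p-q|}(q)$ together with the boundary monotonicity centered at $q$. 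This avoids building competitors near $\Ga$ entirely.

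Your competitor route can be made to work, but as written it has a soft spot: replacing $T\res B_\rh(p)$ by ``$|\tau_j|$ copies of a half-disk'' is not an admissible competitor because it does not match the slice $\langle T,|\cdot-p|,\rh\rangle$ on $\pd B_\rh(p)$. What actually gives the bound is the cone-plus-patch competitor you mention parenthetically, which yields a differential inequality $f(\rh)\le \tfrac{\rh}{d}f'(\rh)+C_\Ga\rh^d$ and, upon integration, the desired density control. Your observation that $|\tau_j|\le|t_j|$ for every $v\ge 2$ is correct and makes the patch mass $v$-independent, but note that with the paper's varifold approach this multiplicity bookkeeping is unnecessary: Allard's formula uses only that $\Ga$ is smooth and compact and that $V_T$ is stationary in its complement.
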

	\begin{proof}
		Let $S$ be an area-minimizing integral current with  boundary $\Ga.$ Then we always have $\ms^v(T)\le\ms^v(S)\le\ms (S).$
		
		Since the measure $V_T$ of $T$ is a stationary integral varifold in $\Ga\cp,$ by compactness of $\Ga$ and (2) Theorem of Section 3.4 in \cite{WA2}, there exists $s>0,c>0$ so that for any $0<r\le s$, and any $p\in\Ga$,
		\begin{align}\label{bdmon}
			\exp(cr)V_T(B_r(p))r^{-d}
		\end{align}
		is monotonically increasing.
		This implies that 
		\begin{align*}
			\ta_p\le \exp(cs)s^{-d}\ms^v(T)\le\exp(cs)s^{-d}\ms(S),
		\end{align*}
		for any point $p\in \Ga.$
		
		On the other hand, for points $p$ at least $\frac{1}{2}s$ away from $\Ga$, by monotonicity formula in \cite{WA1}, we have
		\begin{align*}
			\ta_p\le (\frac{1}{2}s)^{-d}\ms^v(T)=2^ds^{-d}\ms(S).
		\end{align*}
		For $p$ of distance at most $\frac{1}{2}s$ from $\Ga,$ let $q$ be a point minimizing $|p-q|.$ Then
		\begin{align*}
			\ta_p\le& |p-q|^{-d}V_T(B_{|p-q|}(p))\le |p-q|^{-d}V_T(B_{2|p-q|}(q))\\\le& 2^d\exp(c(s-2|p-q|))s^{-d}V(B_s(q))\le2^d\exp(cs)s^{-d}\ms(S)
		\end{align*}
		Combining all the three cases, we are done.
	\end{proof}
	\subsection{Norms on homology}\label{nm}
	In this subsection, we will collect several facts about the minimal mass (Definition \ref{minm}) in homology. First of all the reader needs to know that $\nog{\cdot}$ is non-zero on an integral homology class if and only if the class is non-torsion (\cite{HF2}). Thus, adding torsion classes does not change the norm.
	\begin{lem}\label{norm}
		Let $\no{\cdot}$ be a continuous norm on $\R^n$ and $\{v_1,\cd,v_n\}$ be a basis. Then there exists a $C>0$ that depends continuously on $\no{\cdot},$ so that 
		\begin{align*}
			\sum_j|a_j|\no{v_j}\le C \no{\sum_j a_jv_j},
		\end{align*}with $a_j\in \R.$
	\end{lem}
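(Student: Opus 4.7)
The plan is to exploit the equivalence of all norms on finite-dimensional spaces, and then to track how the resulting constant depends on $\no{\cdot}$. Define
\begin{align*}
    C_1(\no{\cdot}) := \inf_{|a|_1 = 1} \no{\sum_j a_j v_j}, \qquad C_2(\no{\cdot}) := \max_j \no{v_j},
\end{align*}
where $|a|_1 = \sum_j |a_j|$ is the standard $\ell^1$-norm on $\R^n$. Then the inequality follows immediately from
\begin{align*}
    \sum_j |a_j| \no{v_j} \le C_2(\no{\cdot}) |a|_1 \le \frac{C_2(\no{\cdot})}{C_1(\no{\cdot})}\, \no{\sum_j a_j v_j},
\end{align*}
so it suffices to take $C = C_2/C_1$ and show that both quantities are well-defined positive real numbers depending continuously on $\no{\cdot}$.

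First I would check that $C_1(\no{\cdot}) > 0$. The set $\{a \in \R^n : |a|_1 = 1\}$ is compact, and $a \mapsto \no{\sum_j a_j v_j}$ is a continuous function on it which is strictly positive, since the $v_j$ are linearly independent and norms vanish only on zero. Hence the infimum is achieved and is strictly positive. The quantity $C_2$ is obviously finite and positive.

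For continuous dependence on $\no{\cdot}$ (in whatever ambient topology on continuous norms is relevant — typically compact-open/uniform convergence on compact subsets, which is what the cited applications require), note that $C_2(\no{\cdot}) = \max_j \no{v_j}$ is manifestly continuous because it is a finite maximum of evaluation maps. For $C_1$, if $\no{\cdot}_k \to \no{\cdot}$ uniformly on the compact set $\{|a|_1 = 1\}$, then $\no{\sum_j a_j v_j}_k$ converges uniformly to $\no{\sum_j a_j v_j}$ on that set, and hence the infima converge. This gives continuity of $C_1$, and since $C_1 > 0$ pointwise, $C = C_2/C_1$ is a continuous positive function on the space of continuous norms.

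The proof is essentially standard linear algebra — the only point requiring attention is the continuity claim, which would be the main thing to get right; but because both sides are evaluated over the fixed compact set $\{|a|_1 = 1\}$ independently of $\no{\cdot}$, the compactness argument handles this cleanly without any delicacy.
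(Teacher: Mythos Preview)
Your proof is correct and follows essentially the same compactness argument as the paper: both reduce the inequality to bounding the ratio of two equivalent norms on the unit sphere. The one minor difference is that you normalize on the fixed $\ell^1$-sphere $\{|a|_1=1\}$ while the paper normalizes on the sphere $\{\no{v}=1\}$ of the given norm; your choice has the slight advantage that the compact set is independent of $\no{\cdot}$, which makes the continuous dependence of $C$ on $\no{\cdot}$ (a point the paper asserts but does not spell out) immediate.
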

	\begin{proof}
		For $v=\sum_j a_jv_j$ with $a_j\in\R$ define
		\begin{align*}
			\no{v}^\infty=\sum_j|a_j|\no{v_j}.
		\end{align*}
		It is straightforward to verify that $v$ is a continuous function on $\R^n$ and $v$ is a norm on $\R^n.$ Thus, the lemma is equivalent to $\no{v}^\infty/\no{v}$ is bounded. By homogeneity, it suffices to verify this for $\no{v}=1.$ By continuity of $\no{\cdot}^\infty,$ and the compactness of $\no{v}=1$, we are done.
	\end{proof}
	\begin{lem}\label{msbd}
		With the same assumptions as in Theorem \ref{modvc}, for any fixed metric, there is a constant $D>0,$ so that for any non-torsion homology class $[\Si]\in H_d(M,\Z)$ we have
		\begin{align*}
			\frac{\inf_\Z \ms([\Si])}{\no{\Si}}\le D.
		\end{align*}
	\end{lem}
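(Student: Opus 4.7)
The plan is to combine the decomposition of $H_d(M,\Z)$ recalled in Section \ref{at} with Federer's fact (\cite{HF2}) that $\nog{\cdot}$ is a continuous norm on $H_d(M,\R)$, and then apply Lemma \ref{norm} to convert coefficient bounds into mass-norm bounds.

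First I would write any $[\Si]\in H_d(M,\Z)$ uniquely as $[\Si]=\sum_j a_j[\Si_j]+\sum_k b_k[\tau_k]$ with $a_j\in\Z$ and $b_k$ in the finite cyclic group $\Z/\deg[\tau_k]\Z$. Using subadditivity of $\inf_\Z\ms(\cdot)$ under sums of homology classes (add representing integral currents) and the homogeneity bound $\inf_\Z\ms(n[\Pi])\le|n|\inf_\Z\ms([\Pi])$ for $n\in\Z$, I would obtain
\begin{align*}
\inf_\Z\ms([\Si])\le \sum_j|a_j|M_j + T,
\end{align*}
where $M_j:=\inf_\Z\ms([\Si_j])$ and $T$ is the maximum of $\inf_\Z\ms(\sum_k b_k[\tau_k])$ over the finitely many possible values of $(b_k)$; both are finite constants depending only on the generators and the chosen metric.

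Next I would pass to the real homology side. Since torsion classes vanish in $H_d(M,\R)$, one has $\nog{\Si}=\nog{\sum_j a_j\Si_j}$. Applying Lemma \ref{norm} to the continuous norm $\nog{\cdot}$ with the basis $\{[\Si_j]\}$ of $H_d(M,\R)$ yields a constant $C>0$ with $\sum_j|a_j|\nog{\Si_j}\le C\nog{\Si}$, and hence, using $\nog{\Si_j}>0$ for non-torsion $[\Si_j]$ per \cite{HF2}, the comparison $\sum_j|a_j|\le C\nog{\Si}/\min_j\nog{\Si_j}$.

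The last step absorbs the additive constant $T$. For non-torsion $[\Si]$ some $a_j\ne 0$, so $\sum_j|a_j|\ge 1$, and the previous inequality forces the uniform lower bound $\nog{\Si}\ge \epsilon_0:=\min_j\nog{\Si_j}/C>0$ across all non-torsion classes. Therefore $T\le (T/\epsilon_0)\nog{\Si}$, and combining the two estimates gives
\begin{align*}
\inf_\Z\ms([\Si])\le \left(\frac{C\max_j M_j}{\min_j\nog{\Si_j}}+\frac{T}{\epsilon_0}\right)\nog{\Si}=:D\nog{\Si}.
\end{align*}
The argument is essentially bookkeeping, and there is no real obstacle; the only conceptually substantive input is the already-cited fact from \cite{HF2} that $\nog{\cdot}$ descends to a genuine norm, rather than merely a seminorm, on the free part of $H_d(M,\Z)\otimes\R$, which both makes $\nog{\Si_j}>0$ and permits Lemma \ref{norm} to compare the integer coefficients $a_j$ to $\nog{\Si}$.
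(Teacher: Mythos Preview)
Your proof is correct and follows essentially the same approach as the paper: decompose $[\Si]$ into free and torsion parts, bound $\inf_\Z\ms([\Si])$ by subadditivity, and invoke Lemma \ref{norm} on the norm $\nog{\cdot}$ to compare $\sum_j|a_j|\nog{\Si_j}$ with $\nog{\Si}$. The only cosmetic difference is that the paper packages the two contributions via a single mediant-inequality estimate of the ratio $\inf_\Z\ms([\Si])/\nog{\Si}$, whereas you first extract the uniform lower bound $\nog{\Si}\ge\epsilon_0$ from $\sum_j|a_j|\ge 1$ and then absorb the torsion constant $T$; the underlying arithmetic is the same.
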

	\begin{rem}
		For torsion class $[\Si]$, $\no{\Si}=0$ while $\inf_\Z\ms([\Si])>0.$
	\end{rem}
	\begin{proof}
		In the notation of Section \ref{at}, we can write $$\Si=\sum_j\ai_j[\Si_j]+\be_j[\tau_j],$$ with $\ai_j\in \Z,\be_j\in[-N,N]$, where we regard terms with ill-defined subscript as zero. By Lemma \ref{norm} there is $C>0,$ so that
		\begin{align*}
			\no{\Si}=\no{\sum_j[\Si_j]}\ge C\m \sum_j|\ai_j|\no{[\Si_j]}.
		\end{align*}
		By the mediant inequality and non-torsion of $[\Si]$ (i.e., $\max_j|\ai_j|\ge 1$), we have
		\begin{align*}
			&	\frac{\inf_\Z\ms([\Si])}{\no{\Si}}\\\le& C\frac{\sum_j |\ai_j|\inf_\Z\ms([\Si_j])+|\be_j|\inf_\Z\ms([\tau_j])}{\sum_j|\ai_j|\no{\Si_j}}\\
			=&C\frac{\sum_j |\ai_j|\inf_\Z\ms([\Si_j])}{\sum_j|\ai_j|\no{\Si_j}}+C\frac{
				\sum_j|\be_j|\inf_\Z\ms([\tau_j])}{\sum_j|\ai_j|\no{\Si_j}}\\
			\le&C\max_j \frac{\inf_\Z\ms([\Si_j])}{\no{[\Si_j]}}+CNa\frac{\max_j\inf_\Z\ms([\tau_j])}{\min_j\no{\Si_j}},
		\end{align*}where $N,a$ are constants defined in the decomposition of $H_d(M,\Z)$ as in Section \ref{at}.
		The last line is independent of $\ai_j,\be_j,\cd$. We are done.
	\end{proof}
	\subsection{Regularity theorems}
	In this subsection, we will prove some essential lemmas about the regularity of mod $v$ currents.
	\begin{lem}\label{dv2}
		Let $T$ be a mod $v$ area-minimizing current in a not necessarily complete open Riemannian manifold $M$, with $v\ge 2.$ Let $S$ be a rectifiable current that is representative modulo $v$ of $T$ (page 430 of \cite{HF}). Then
		\begin{align*}
			\spt\pd S\s (\textnormal{Reg}(T)\cap\{p|\ta_p<\frac{v}{2}\})\cp,
		\end{align*}where $\textnormal{Reg}(T)$ denotes the regular part of $T.$\end{lem}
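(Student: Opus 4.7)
The plan is to show that every $p\in\textnormal{Reg}(T)$ with $\ta_p<v/2$ satisfies $p\notin\spt\pd S$, which is exactly the claimed containment. By the definition of regularity, there is a neighborhood $U\ni p$ and a smooth $d$-submanifold $N\s U$ without boundary in $U$ such that $T\res U=kN$ as a mod $v$ current, where $k$ is the positive integer equal to the density $\ta_p$; the assumption $\ta_p<v/2$ gives the \emph{strict} inequality $k<v/2$.

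The key input is that $S$, being a representative modulo $v$, is a minimum-mass rectifiable representative of $T$ in the sense $\no{S}=\ms^v(T)$ (page 430 of \cite{HF}). Shrinking $U$ if necessary so that $N$ is closed in $U$, write $S\res U=A+B$, where $A$ is the part of $S$ supported on $N$ and $B$ the part supported on a rectifiable set disjoint from $N$. Writing $A=m(x)N$ for a $\Z$-valued multiplicity function $m$ with respect to a fixed orientation of $N$, the equivalence $S\equiv T\pmod v$ on $U$ forces $m(x)\equiv k\pmod v$ pointwise on $N\cap U$ and $\mult(B)$ to be a nonzero integer multiple of $v$ on $\spt B$. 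Hence $|m|\ge k$ and $|\mult(B)|\ge v$ pointwise.

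Localizing the mass identity and combining with these lower bounds gives
\begin{align*}
	k\,\mathcal{H}^d(N\cap U)=\ms^v(T\res U)=\no{S\res U}\ge k\,\mathcal{H}^d(N\cap U)+v\,\mathcal{H}^d(\spt B),
\end{align*}
which forces $\mathcal{H}^d(\spt B)=0$, so $B=0$, and $|m|=k$ almost everywhere on $N\cap U$. The strict bound $k<v/2$ is the essential point here: it rules out $m=-k$, since $-k\equiv v-k\not\equiv k\pmod v$, so $m\equiv k$ on $N\cap U$. Therefore $S\res U=kN$, and since $N$ has no boundary in $U$ we conclude $\pd S\res U=k\,\pd N\res U=0$, giving $p\notin\spt\pd S$.

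The main subtlety I expect is justifying the rectifiable decomposition $S\res U=A+B$ and localizing the minimum-mass equality to $U$; both rely on standard structure theory of rectifiable currents and additivity of mass under restriction. The strict density bound is sharp: at $\ta_p=v/2$, the multiplicities $+k$ and $-k$ both reduce to $k$ modulo $v$, so the argument breaks down and one genuinely obtains orientation-reversing loci that contribute a nontrivial boundary to $S$, consistent with triple-junction-type behavior.
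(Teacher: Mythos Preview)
Your proof is correct and follows essentially the same approach as the paper: both localize near a regular point, use the minimum-mass property of the representative $S$ to force its multiplicity to be exactly $k$ with the correct sign when $k<v/2$, and conclude $\pd S=0$ there. The paper streamlines two of your steps by (i) immediately using $\spt S\subset N$ from the representative-mod-$v$ property (page 430 of \cite{HF}) rather than introducing a piece $B$ off $N$ and killing it via the mass inequality, and (ii) explicitly citing Corollary~1.4 of \cite{RY} to pass from the mod~$v$ equivalence $S\equiv kN$ to the pointwise congruence $m\equiv k\pmod v$, which you assert without reference.
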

	\begin{proof}
		Since $\ta\le \frac{v}{2}$ $d$-dimensional a.e. for $S$ (page 430 of \cite{HF}), it suffices to prove that if $p\in\spt \pd S\cap \textnormal{Reg}(T),$ then $\ta_p= \frac{v}{2}.$ Since $p$ is in the regular set of $T,$ there exists a neighborhood $B_r(p)$ in which $S$ restricted to $B_r(p)$ equals $\ta_p N$ for some smooth submanifold $N$ of $B_r(p)$, so that in some coordinate system $(x_1,\cd,x_{d+c})$, $N$ is the a smooth ball contained in the $x_1\cd x_d$-plane. By Corollary 1.4 of \cite{RY}, there exists a integral flat chain $R$ in $B_r(p)$ so that
		\begin{align*}
			S\res B_r(p)=vR+\ta_p N
		\end{align*} as a rectifiable current. Since $S,N$ are both of finite mass, we deduce that $R$ is also of finite mass, thus a rectifiable current by 4.2.16 of \cite{HF}.
		
		By 4.1.28(5) of \cite{HF}, there exist a $d$-dimensional rectifiable set $B$, so that
		\begin{align*}
			R(\phi)=\int_B\Ta\ri{\eta,\phi} d\hm^d, 
		\end{align*}
		with $\phi$ any smooth $d$-dimensional form, $\eta$ an $\hm^d$ measurable simple unit vector field, $\Ta$ is $\hm^d$ a.e. the positive integer density of $S.$
		
		By page 430 of \cite{HF}, the support of $S$ is contained in $N,$ thus the support of $R$ is also contained in $N,$ i.e., $B\s N.$ This implies that $\eta=\pm \T N$ $\hm^d$-a.e.
		
		Thus, we have
		\begin{align*}
			S\res B_r(p)(\phi)=\int_{N\setminus B}\ta_p \ri{\T_xN,\phi}d\hm^d(x)+\int_{B} (\ta_p\pm v\Ta)\ri{\T_x N,\phi}d\hm^d(x).
		\end{align*} 
		By 4.1.28 of \cite{HF}, we deduce that
		\begin{align*}
			\ms(S\res B_r(p))=\int_{N\setminus B}\ta_pd\hm^d+\int_B |\ta_p\pm v\Ta|d\hm^d.
		\end{align*}
		However, since $\Ta$ is a positive integer on $B$ $\hm^d$-a.e., and $\ta_p\le \frac{v}{2}$, we have 
		\begin{align*}
			|\ta_p\pm v\Ta|\ge \frac{v}{2},
		\end{align*}
		where equality holds if and only if $\ta_p=\frac{v}{2},\Ta=1.$
		
		Thus, we have
		\begin{align*}
			\ms(S\res B_r(p))\ge \ta_p \ms(N)=\ms^v(S\res B_r(p)). 
		\end{align*}
		However, by page 430 of \cite{HF}, $\ms(S\res B_r(p))=\ms^v(S\res B_r(p))$. Since $\Ta>0$ $\hm^d$ a.e., then we must have $\ta_p=\frac{v}{2},$ $\Ta=1$ on $B$ $\hm^d$ a.e.  We are done.
	\end{proof}
	\begin{lem}\label{reg}
		With the same assumption as in Lemma \ref{dst}, assume
		\begin{itemize}
			\item either $\pd T=0\mod v$ or $\pd T=\Ga\mod v,$ where $\Ga=\sum_{j}t_j\Ga_j,$ with $\Ga_j$ a finite collection of disjoint compact connected oriented submanifolds of $M$ and $\frac{v}{2}>t_j\ge 1$ integers,
			\item the density of $T$ at any point $p\in M$ satisfies $\ta_p<\frac{v}{2}$.
		\end{itemize}
		Then $T$ is an integral current with either $\pd T=0$ or $\pd T=\Ga$, respectively.
	\end{lem}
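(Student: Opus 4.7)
The plan is to pass to a rectifiable representative $S$ of $T$ modulo $v$ whose boundary support is controlled by Lemma \ref{dv2} and the strict density bound, and then invoke the constancy theorem on each component $\Gamma_j$ to recognize $\pd S$ as $\Gamma$.

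Pick a rectifiable representative $S$ of $T$ modulo $v$ with $\ms(S)=\ms^v(T)$. On $M\setminus\spt\Gamma$, $T$ is a cycle mod $v$; Lemma \ref{dv2} combined with the hypothesis $\ta_p<v/2$ everywhere gives
\begin{align*}
\spt\pd S\setminus\spt\Gamma\;\subset\;\textnormal{Sing}(T)\setminus\spt\Gamma.
\end{align*}
The key regularity input is that any point in the codimension-one stratum of $\textnormal{Sing}(T)$ must carry density at least $v/2$: its tangent cone is a nontrivial sum of half-planes with positive integer multiplicities $m_i$ whose signed sum vanishes mod $v$, so $\ta\geq\sum m_i/2\geq v/2$. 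Our strict density hypothesis rules this out. Combining with the interior regularity theorem of \cite{DHMS} (Theorem 1.6 of \cite{DHM}) and Almgren-type stratification, one obtains $\dim(\textnormal{Sing}(T)\setminus\spt\Gamma)\leq d-2$.

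Hence $\pd S\res(M\setminus\spt\Gamma)$ is an integral flat chain of dimension $d-1$ supported on a set of $\hm^{d-1}$-measure zero, and so vanishes. Thus $\spt\pd S\subset\sqcup_j\Gamma_j$, and the constancy theorem applied to each connected oriented $\Gamma_j$ produces integers $n_j$ with $\pd S=\sum_j n_j\Gamma_j$; in particular $\pd S$ has finite mass, so $S$ is an integral current and $T$ is integral in the sense defined earlier. The congruence $\pd S\equiv\Gamma\mod v$ forces $n_j\equiv t_j\mod v$. To pin down $n_j=t_j$, I use the mass minimality of $S$: if some $n_j\neq t_j$ then $|n_j|\geq v-t_j>v/2>t_j$, and subtracting a suitable $v$-multiple of an integral filling of $\Gamma_j$ (which exists since in the Plateau case the ambient space is $\R^{d+c}$, and the cycle case has $\Gamma=0$) produces another rectifiable representative of $T$ modulo $v$ with strictly smaller mass, contradicting $\ms(S)=\ms^v(T)$. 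The cycle case $\pd T=0\mod v$ follows at once from the codimension-$2$ bound on $\textnormal{Sing}(T)$ together with the vanishing of top-dimensional flat chains supported on null sets. The main obstacle is the density characterization of mod $v$ codimension-one singular points, which relies essentially on the regularity theory of \cite{DHMS} and \cite{DHM}.
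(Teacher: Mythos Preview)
Your argument tracks the paper closely up to and including the application of the constancy theorem to obtain $\pd S=\sum_j n_j\Gamma_j$ with $n_j\equiv t_j\pmod v$. The genuine gap is in the final step, where you claim $n_j=t_j$.

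You write that if $n_j\neq t_j$ then $|n_j|\ge v-t_j>v/2$, and that ``subtracting a suitable $v$-multiple of an integral filling of $\Gamma_j$ \ldots\ produces another rectifiable representative of $T$ modulo $v$ with strictly smaller mass.'' The first inequality is fine, but the second assertion is unjustified. For any integral $W_j$ with $\pd W_j=\Gamma_j$, the current $S'=S-l_jvW_j$ is indeed another representative of $T$ modulo $v$, yet the triangle inequality only gives $\ms(S')\le\ms(S)+|l_j|v\,\ms(W_j)$, which goes the wrong way and does not contradict $\ms(S)=\ms^v(T)$. Nothing forces cancellation: the filling $W_j$ has no a~priori relation to the support of $S$. (Note also that the lemma is stated under the hypotheses of Lemma~\ref{dst}, i.e.\ on a closed manifold $M$, where an individual $\Gamma_j$ need not bound integrally at all, so the parenthetical appeal to $\R^{d+c}$ does not match the setting.)

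The paper's route to $n_j=t_j$ is different and uses the density hypothesis \emph{at the boundary points themselves} rather than a mass comparison. At $\hm^{d-1}$-a.e.\ $p\in\Gamma_j$ the tangent cone to $T$ is $\R^{d-1}$ times a one-dimensional cone consisting of $k$ rays, with $k<v$ because the density there is $k/2<v/2$. Orienting the rays via the lift $S$ and writing $a$, $b$ for the number pointing away from, respectively toward, the origin, one has $a-b=n_j=t_j+l_jv$ and $a+b=k$. The elementary constraints $a,b\ge 0$, $k<v$, $1\le t_j<v/2$ then pin down $l_j=0$. This tangent-cone counting argument is what should replace your mass-subtraction step.
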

	\begin{proof}
		Let $S$ be a rectifiable current that is a representative modulo $v$ of $T.$ It suffices to prove that $\pd S=\Ga.$
		
		Let us first deal with the case of $\pd T=0\mod v$ using interior density bound. Let us analyze strata by strata using Almgren stratification (\cite{BW}).
		
		By Lemma \ref{dst}, $\pd S$ intersect the regular set of $T$ only at those of density $\ta\ge \frac{v}{2},$ which is empty by our assumptions. Thus, $\pd S$ is supported in the singular set of $T.$
		
		By Theorem 1.6 of \cite{DHMS}
		the flat tangent cone singular points of any mod $v$ area-minimizing current with density less than $\frac{v}{2}$ is of Hausdorff codimension at least $2.$ In the hypersurface case, we can also use \cite{BW1}, which gives the stronger conclusion that no such points exist.
		
		Next, note that points with $d-1$-symmetric tangent cones are empty since every such cone must be $\R^{d-1}$ times $v$ rays originating from the origin, which has density $\frac{v}{2}.$
		
		Since the $d-j$-strata of the singular set has Hausdorff dimension at most $d-j$ (\cite{BW}), we deduce that the singular set of $T$ is of Hausdorff dimension at most $d-2.$ Consequently, $\spt \pd S$ has Hausdorff dimension at most $d-2.$ By 4.1.20 of \cite{HF}, a $d-1$-dimensional flat chain supported in a $d-2$-dimensional integral geometric measure zero set must equal to zero. Since Hausdorff measure zero implies integral geometric measure zero (2.10.6 of \cite{HF}), we deduce that $\pd S=0.$ Thus, $T$ is an integral current.
		
		Now let us deal with the case of $\pd T=\Ga\mod v\not=0\mod v.$ Again, arguing as above, we deduce that $\pd S$ must be supported in $\Ga.$ By 4.1.31 of \cite{HF}, we deduce that
		\begin{align*}
			\pd S=\sum_j s_j\Ga_j,
		\end{align*} with $s_j\in\Z$.
		
		Since $\pd S=\Ga\mod v,$ we have $$s_j-t_j=l_jv$$ with $l_j\in\Z.$ Now consider the Almgren stratification of $T$ at the boundary points $p\in\Ga$, then for $\hm^{d-1}$ a.e., the tangent cone must be $\R^{d-1}$ times $k$ (depending on $p$) number of rays from origin, with $\frac{k}{2}<\frac{v}{2}$. Now if we orient the rays using the orientation of $S,$ then there are $a$ rays pointing away from the origin and $b$ rays pointing towards the origin, with
		\begin{align*}
			a-b=s_j=t_j+l_jv,a+b=k.
		\end{align*}
		This implies that
		\begin{align*}
			0&\le a=\frac{1}{2}t_j+\frac{1}{2}l_jv+\frac{k}{4}<\frac{1}{4}v+\frac{1}{2}l_jv+\frac{v}{4},\\
			0&\le b=-\frac{1}{2}t_j-\frac{1}{2}l_jv+\frac{k}{4}<-\frac{1}{2}l_j v+\frac{v}{4}.
		\end{align*}
		This gives
		\begin{align*}
			-1<l_j<\frac{1}{2}.
		\end{align*}
		Thus, we must have $l_j=0.$ We are done.\end{proof} 
	
	\section{Proof of Theorem \ref{modvc},\ref{modvb},\ref{modva} and Corollary \ref{setk}}
	\subsection{Proof of the theorems}
	For Theorem \ref{modvc}, apply Lemma \ref{dst} and Lemma \ref{reg}. 
	
	For Theorem \ref{modvb} apply Lemma \ref{dstb} and Lemma \ref{reg}.
	
	For Theorem \ref{modva}, by the universal coefficient theorem, a class in $H_d(M,\Z/v\Z)$ admits integral representatives if and only if it lies in the image of tensoring with $\Z/v\Z,$ i.e., taking a class modulo $v.$ 
	
	Write $$H_d(M,\Z)=\Z^{b_d}\oplus G,$$
	where $b_d$ is the $d$-th Betti number of $M$ and $G$ denotes the torsion subgroup of $M.$ Then we have \begin{align*}
		H_d(M,\Z)\ts \Z/v\Z
		=(\Z/v\Z)^{b_d}\oplus (G\ts \Z/v\Z).
	\end{align*}
	Thus, the denominator in our theorem equals $v^{b_d}\times \# G\ts \Z/v\Z.$
	
	For $[\Si]\in G$, since $G$ is finite, by Theorem \ref{modvc}, there exists $\nu>0$ so that for any $v\ge \nu,$ all homology classes in $G\ts \Z/v\Z$ admits only integral minimizers. If $b_d=0$, we are done. Now suppose $b_d>0$ and $v\ge \nu.$ For non-torsion $[\Si]\in H_d(M,\Z)$ that occurs as pre-image of modulo $v$, in the notation of Section \ref{mfd}, write
	\begin{align*}
		[\Si]=\sum_j\ai_j[\Si_j]+\be_j[\tau_j],
	\end{align*}
	with all $\ai_j\in[-\frac{v}{2},\frac{v}{2}]\cap\Z$ and all $\be_j\in[-N,N]\cap\Z.$ Then by Lemma \ref{msbd}, we have
	\begin{align*}
		\inf_{\Z/v\Z}\ms([\Si\mod v])\le\inf_\Z\ms([\Si])\le  D\no{\Si}\le D \sum_j|\ai_j|\no{\Si_j}.
	\end{align*}
	Now apply Lemma \ref{mon} to deduce that for any mod $v$ area-minimizing current in $[\Si]$ and any point $p\in M,$ we have
	\begin{align*}
		\ta_p< C\sum_j|\ai_j|\no{\Si_j}\le Cb_d\sup_j\no{\Si_j}\max_j|\ai_j|.
	\end{align*} 
	Thus, if 
	\begin{align}\label{knbd}
		\max_j|\ai_j|<\frac{v}{2Cb_d\sup_j\no{\Si_j}},
	\end{align} then every area-minimizing mod $v$ current in $[\Si\mod v]$ is integral, by Lemma \ref{dst}. 
	
	Note that asymptotically we have
	\begin{align}\label{asy}
		\#\bigg\{(\ai_1,\cd,\ai_j,\cd)|\max_j|\ai_j|<\frac{v}{2Cb_d\sup_j\no{\Si_j}}\bigg\}=\bigg(\frac{v}{2Cb_d\sup_j\no{\Si_j}}\bigg)^{b_d}+O(v^{b_d-1})
	\end{align}
	
	Note that the right-hand side of the inequality \ref{knbd} is independent of the coefficients $\be_j.$ In other words, the same estimate holds for any other homology class $[\Si']$ so that $[\Si']-[\Si]$ is torsion.  
	Thus, counting all non-torsion $[\Si]$ asymptotically with (\ref{knbd}) amounts  to multiplying (\ref{asy}) with $\# G\ts \Z/v\Z$,  minus the number of torsion classes.
	
	Consequently, breaking down the numerator in the theorem into non-torsion and torsion classes, we have
	\begin{align*}
		&\liminf_v\frac{\#\{ H_d(M,\Z/v\Z)\ni[\Pi]\textnormal{ having only integral area-minimizers}\}}{\#\{ H_d(M,\Z/v\Z)\ni[\Pi]\textnormal{ admitting integral representatives}\}}\\
		\ge&\liminf_v\frac{\Bigg(\bigg(\frac{v}{2Cb_d\sup_j\no{\Si_j}}\bigg)^{b_d}+O(v^{b_d-1})-1\Bigg)\# G\ts\Z/v\Z+\# G\ts\Z/v\Z}{v^{b_d}\times \# G\ts \Z/v\Z}\\
		=&\frac{1+O(v^{-1})}{(2Cb_d\sup_j\no{\Si_j})^{b_d}}>0.
	\end{align*}
	We are done.
	\subsection{Proof of Corollary \ref{setk}}
	First, note that $\inf_{\Z/v\Z}\ms_g([\Si\mod v])$ and $\inf_\Z\ms_g([\Si])$ depend continuously on the metric.
	For the proof, one can use a straightforward adaptation of Lemma 3.7 of \cite{ZLc} with $\Z$ and $\Z/v\Z$ coefficient deformation theorems. We need to work a bit harder for $\inf_{\Z/v\Z}\ms_g([\Si\mod v]).$
	\begin{lem}\label{contm}
		$\inf_{\Z/v\Z}\ms_g([\Si\mod v])$ depends continuously on the metric $g.$
	\end{lem}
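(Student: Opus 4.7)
The plan is to establish matching upper and lower semi-continuity of $g\mapsto\inf_{\Z/v\Z}\ms_g([\Si\mod v])$, exploiting that for any fixed mod $v$ chain $Z$ the mass $\ms_g^v(Z)$ varies continuously in $g$. First I would record the elementary scaling fact: if $g_n\to g$ smoothly on $M$, then there exist $c_n\to 1$ with $c_n^{-1}\ms_g^v(Z)\le\ms_{g_n}^v(Z)\le c_n\ms_g^v(Z)$ for every mod $v$ rectifiable chain $Z$, as follows from the local formula $\sqrt{\det g(e_i,e_j)}$ for the $d$-volume form on oriented planes and from the infimum definition of mod $v$ mass (page 430 of \cite{HF}).

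For upper semi-continuity, fix a mod $v$ area-minimizer $T$ of $[\Si\mod v]$ in the metric $g$ (existence follows from the mod $v$ compactness plus lower semi-continuity of mass, 4.2.17 and 4.2.26 of \cite{HF}). Since membership in $[\Si\mod v]$ is purely topological, $T$ is a competitor for every $g_n$, so $\inf_{\Z/v\Z}\ms_{g_n}([\Si\mod v])\le\ms_{g_n}^v(T)\to\ms_g^v(T)=\inf_{\Z/v\Z}\ms_g([\Si\mod v])$.

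For lower semi-continuity, choose minimizers $T_n$ realizing $\inf_{\Z/v\Z}\ms_{g_n}([\Si\mod v])$, and compare with a fixed representative $S\in[\Si\mod v]$: $\ms_{g_n}^v(T_n)\le\ms_{g_n}^v(S)\le c_n\ms_g^v(S)$, giving a uniform $g$-mass bound. Mod $v$ flat compactness on the compact manifold $M$ (page 424 of \cite{HF1}) extracts a subsequence $T_{n_k}$ converging in mod $v$ flat norm (measured with $g$) to a mod $v$ cycle $T$. Lower semi-continuity of mass under flat convergence, together with the scaling $\ms_g^v(T_{n_k})=(1+o(1))\ms_{g_{n_k}}^v(T_{n_k})$, gives $\ms_g^v(T)\le\liminf\ms_{g_{n_k}}^v(T_{n_k})$. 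Once the homology class is identified, $\inf_{\Z/v\Z}\ms_g([\Si\mod v])\le\liminf\inf_{\Z/v\Z}\ms_{g_n}([\Si\mod v])$ follows.

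The main obstacle is verifying that the flat limit $T$ actually lies in $[\Si\mod v]$. I would invoke the mod $v$ deformation theorem on $M$: any mod $v$ cycle of sufficiently small mass can be pushed onto the $d$-skeleton of a fine triangulation with controlled mass increase, and the resulting cellular cycle has mass below the minimal non-zero cell mass, hence is zero; consequently the original cycle is a mod $v$ boundary. Applied to the decomposition $T_{n_k}-T=Q_k+\partial R_k$ coming from the flat norm (with $\ms^v(Q_k)+\ms^v(R_k)\to 0$), the cycle $Q_k=(T_{n_k}-T)-\partial R_k$ has mod $v$ mass going to zero, hence is a mod $v$ boundary for large $k$, so $T_{n_k}$ and $T$ represent the same class in $H_d(M,\Z/v\Z)$. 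The remaining ingredients are routine consequences of Federer's results, and combining the two inequalities yields continuity.
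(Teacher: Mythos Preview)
Your argument is correct for the quantity in the stated lemma, and it is essentially the ``straightforward adaptation of Lemma 3.7 of \cite{ZLc} with $\Z$ and $\Z/v\Z$ coefficient deformation theorems'' that the paper invokes in one line in the paragraph immediately preceding the lemma. The scaling bound, the upper-/lower-semicontinuity split, and the use of the mod $v$ deformation theorem to force the small-mass cycle $Q_k$ to be a boundary (hence preserving $[\Si\mod v]$ under flat limits) are all sound.

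However, note that the paper's displayed proof of this lemma is actually establishing continuity of a \emph{different} quantity, namely $\inf_{\Z}\ms_g([\Si\mod v])=\inf_{[\Pi]}\inf_\Z\ms_g([\Si]+v[\Pi])$; the lemma statement appears to contain a typo (compare the sentence ``We need to work a bit harder for \ldots'' with the first line of the proof). The paper's method there is quite different from yours: rather than compactness and semicontinuity, it shows via the norm estimates of Lemma~\ref{norm} that the infimum over the infinitely many integral classes $[\Si]+v[\Pi]$ is, in a neighborhood of any metric, achieved on a fixed finite set of classes, and then invokes the already-known continuity of $\inf_\Z\ms_h$ on each fixed integral class. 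Your compactness approach would not directly yield this, since an integral flat limit of minimizers in varying classes $[\Si]+v[\Pi_j]$ need not sit in any one of them; the paper's finiteness reduction is what makes the $\inf_\Z$ version work. Conversely, your argument handles the $\inf_{\Z/v\Z}$ quantity in one stroke without any algebra on $H_d(M,\Z)$.
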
 
	\begin{proof}
		First, let us prove that in any fixed $g$ the  $\inf_{\Z}\ms_g([\Si\mod v])$ is always achieved on some class $[\Si]+v[\Pi].$ (Note bullet (1) of our corollary follows directly from this.) We argue by contradiction. Suppose not, then there exists a sequence of distinct homology classes $\Pi_l$ so that a sequence of area-minimizing integral current $T_l$ in $[\Si]+v[\Pi_l]$ has strictly decreasing mass. Since the torsion subgroup of $H_d(M,\Z)$ is finite, one can choose a torsion-free subsequence $[\Pi_l]$, not relabeled. In the notation of Section \ref{at}, write $$[\Si]+v[\Pi_l]=\sum_{j_l}\ai_{j_l}[\Si_{j_l}]+\be_{j_l}[\tau_{j_l}].$$ Since $[\Pi_l]$ is an infinite distinct sequence of non-torsion classes, without loss of generality, one can suppose that $\ai_{1_l}\to\infty.$ By Lemma \ref{norm}, this implies that
		\begin{align}\label{infb}
			\ms(T_l)\ge \no{[\Si]+v[\Pi_l]}\ge C\m \sum_{j_l}|\ai_{j_l}|\no{\Si_{j_l}}\ge C\m |\ai_{1_l}|\no{\Si_{1_l}}\to\infty,
		\end{align}a contradiction. 
		
		Next, let us prove that for any metric $g$, there exists an open set $\Om_g$ in the space of Riemannian metrics, so that for any $h\in \Om_g,$ $\inf_{\Z}\ms_h([\Si\mod v])$ is achieved on some fixed finite collection of homology classes, regardless of $h$. By continuous dependence of $\no{\cdot}_h$ on the metric (Lemma 3.7 in \cite{ZLc}) and by Lemma \ref{norm}, there exist an open set $\Om_g\ni g$ so that for any $h\in\Om_g$, we have $\no{\Si}_h\ge C\sum_j |\ai_j|\no{\Si_j}_h$ with some $C>0$ in the notation of decomposition in Section \ref{at}.  Shrink $\Om_g$ if necessary so that $\sup_j\sup_{h\in\Om_g}\no{\Si_j}_h<\infty.$ Now arguing by contradiction, we deduce that there is a sequence of metrics $g_j\to g$ and distinct non-torsion homology classes $[\Pi_j]$ so that $\inf_{\Z}\ms_{g_j}([\Si\mod v])$ is achieved on $[\Si]+v[\Pi_j].$ The same estimate as in (\ref{infb}) in the previous paragraph gives the contradiction, since we have uniform control on $C$ and $\no{\Si_j}_h$ near $g.$
		
		To sum it up, near any fixed metric $g$, there are $k$ different homology classes $[\Pi_l]$ so that
		\begin{align*}
			\inf_{\Z}\ms_h([\Si\mod v])=\min_{j} \inf_{\Z}\ms_h([\Si]+v[\Pi_j]).
		\end{align*} Our lemma then follows from the continuous dependence of $\inf_{\Z}\ms_h([\Si]+v[\Pi_l])$ on the metric and finiteness of $k.$
	\end{proof}
	Now we know that both sides of (\ref{lavi}) depend continuously on the metric. Thus our corollary follows directly from the definition of $K_{[\Si\mod v]}$ and Theorem \ref{modvc}.
	\section{Proof of Theorem \ref{lav} and Theorem \ref{lavb}}
	\subsection{Proof of Theorem \ref{lav}}
	By Lemma \ref{thom}, there is a smooth connected non-orientable submanifold $Q$ of $M$, so that
	\begin{align*}
		[\Si\mod v]=[\frac{v}{2}Q\mod v].
	\end{align*}
	Apply Lemma \ref{retr} with $t=\frac{v}{2},$ we get a smooth metric $g$ on $M,$ so that $\frac{v}{2}Q$ is area-minimizing mod $v$ in $h$. Moreover, there is a neighborhood $U$ that deformation retracts onto $Q$, so that any stationary varifold not contained in $U$ must have an area at least $R\frac{v}{2}\ms(Q).$ Now, let $T$ be an area-minimizing integral current in $[\Si]$. If $T$ is not contained in $U,$ we are done. If $T$ is contained in $U,$ the  $d$-th integral homology of $U$ is the same as the $d$-th integral homology of $Q$, by homotopy invariance. Thus, $H_d(U,\Z)=0$ by non-orientability of $Q$. This is a contradiction, as $T\not=0$ in integral homology. 
	\subsection{Proof of Theorem \ref{lavb}}
	Let $S^{d-1}$ be the unit sphere in $\R^d\s\R^{d+c}.$
	Set
	\begin{align*}
		\Ga_0=S^{d-1}\times \{0\}+S^{d-1}\times \{(\e,0,\cd,0)\}+\cd +S^{d-1}\times \{((v-1)\e,\cd,0)\}.
	\end{align*}
	Note that 
	\begin{align*}
		\Ga_0-vS^{d-1}=\pd\sum_{j=1}^v S^{d-1}\times [0,j\e].
	\end{align*} 
	Now, we can do successive connected sums to make $\Ga_0$ into a connected submanifold $\Ga.$ By making the necks of connected sums with radius at most $\e$, we can make sure that $\Ga$
	\begin{align*}
		\Ga-\Ga_0=\pd C,
	\end{align*} 
	where $C$ has area at most $O(\e^{d}).$
	
	In other words $\Ga-vS^{d-1}=\pd D=\pd(C+\sum_{j=1}^v S^{d-1}\times[0,j\e])$ for some $D$ with area at most $O(\e).$ This implies that any area-minimizing integral current $T$ with $\pd T=\Ga$ must have mass in $[v\ms(S^{d-1})-O(\e),v\ms(S^{d-1})+O(\e)].$ 
	
	On the other hand, 
	\begin{align*}
		\pd (C+\Si_{j=1}^v S^{d-1}\times[0,j\e])\mod v=\Ga-vS^{d-1}\mod v=\Ga.
	\end{align*}
	Thus, any area-minimizing mod $v$ current $T'$ with $\pd T'=\Ga\mod v$ must have an area at most $O(\e).$ Taking $\e$ small, we are done.


\begin{thebibliography}{9}
		\bibitem{WA1} William K. Allard, \textit{First Variation of a Varifold}, Annals of Mathematics, Vol. 95, No. 3 (May 1972), pp. 417-491
		\bibitem{WA2} William K. Allard, \textit{First Variation of a Varifold: Boundary Behavior}, Annals of Mathematics, Vol. 101, No. 3 (May 1975), pp. 418-446	\bibitem{A}Frederick J. Almgren, Jr. \textit{Almgren's big regularity paper. Q-valued functions minimizing Dirichlet's integral and the regularity of area-minimizing rectifiable currents up to codimension 2.} With a preface by Jean E. Taylor and Vladimir Scheffer. World Scientific Monograph Series in Mathematics, 1. World Scientific Publishing Co., Inc.
		\bibitem{VB} {V. M. Buchstaber}, \textit{Modules of differentials of the Atiyah-Hirzebruch spectral sequence, I}
		Math. USSR Sb. 7:2 (1969), 299–313; 12:1 (1970), 59–75.
		\bibitem{BD}
		S. Buoncristiano; M. Dedò, \textit{On resolving singularities and relating bordism to homology}, Annali della Scuola Normale Superiore di Pisa - Classe di Scienze (1980),
		Volume: 7, Issue: 4, page 605-624
		\bibitem{DHM}J. Dadok, R. Harvey and F. Morgan, \textit{Calibrations on $\R^8$}, Transactions of the American Mathematical Society, May 1988, Vol. 307, No. 1
		(May, 1988), pp. 1-40
		\bibitem{D}Camillo De Lellis,\textit{Allard's interior regularity theorem: an invitation to stationary varifolds}, Nonlinear analysis in geometry and applied mathematics. Part 2, 23–49.
		Harv. Univ. Cent. Math. Sci. Appl. Ser. Math., 2
		International Press, Somerville, MA, 2018
		\bibitem{DPHM}C. De Lellis; G. De Philippis; J. Hirsch; A. Massaccesi, \textit{On the boundary behavior of mass-minimizing integral currents}, available at https://www.math.ias.edu/delellis/node/148,
		\bibitem{DHMS}Camillo De Lellis, Jonas Hirsch, Andrea Marchese, Salvatore Stuvard 	\textit{Regularity of area minimizing currents mod p}, Geometric and Functional Analysis, 30, pages1224–1336 (2020)
		\bibitem{DHMSS}Camillo De Lellis, Jonas Hirsch, Andrea Marchese, Luca Spolaor, Salvatore Stuvard, \textit{Area minimizing hypersurfaces modulo p: a geometric free-boundary problem}, preprint available at \url{https://arxiv.org/abs/2105.08135}
		\bibitem{DHMSS2}Camillo De Lellis, Jonas Hirsch, Andrea Marchese, Luca Spolaor, Salvatore Stuvard, \textit{Fine structure of the singular set of area minimizing hypersurfaces modulo p}, preprint available at \url{https://arxiv.org/abs/2201.10204}
		\bibitem{DMS}Camillo De Lellis, Paul Minter, Anna Skorobogatova, \textit{The Fine Structure of the Singular Set of Area-Minimizing Integral Currents III: Frequency 1 Flat Singular Points and Uniqueness of Tangent Cones}, available at \url{https://arxiv.org/abs/2304.11553}
		\bibitem{DS1}C. De Lellis; E. Spadaro, \textit{Regularity of area-minimizing currents I: $L^p$ gradient estimates}, Geom. Funct. Anal. 24 (2014), no. 6, 
		\bibitem{DS2}C. De Lellis; E. Spadaro, \textit{	Regularity of area-minimizing currents II: center manifold},
		Ann. of Math. (2) 183 (2016), no. 2, 499–575.
		\bibitem{DS3}C. De Lellis; E. Spadaro, \textit{Regularity of area-minimizing currents III: blow-up},
		Ann. of Math. (2) 183 (2016), no. 2, 577–617.
		Geom. Funct. Anal. 24 (2014), no. 6, 1831–1884.
		\bibitem{DSS0} C. De Lellis; E. Spadaro; L. Spolaor, \textit{Uniqueness of tangent cones for 2-dimensional almost minimizing currents}, Comm. Pure Appl. Math. 70, 1402-1421
		\bibitem{DSS1}		C. De Lellis; E. Spadaro; L. Spolaor, \textit{Regularity theory for 2-dimensional almost minimal currents I: Lipschitz approximation},	Trans. Amer. Math. Soc. 370 (2018), no. 3, 1783–1801
		\bibitem{DSS2}		C. De Lellis; E. Spadaro; L. Spolaor, \textit{Regularity theory for 2-dimensional almost minimal currents II: branched center manifold},	Ann. PDE 3 (2017), no. 2, Art. 18, 85 pp.
		\bibitem{DSS3}C. De Lellis; E. Spadaro; L. Spolaor, \textit{Regularity theory for 2-dimensional almost minimal currents III: blowup}		To appear in Jour. Diff. Geom.
		\bibitem{DHM}J. Dadok, R. Harvey and F. Morgan, \textit{Calibrations on $\R^8$}, Transactions of the American Mathematical Society, May 1988, Vol. 307, No. 1
		(May, 1988), pp. 1-40
		\bibitem{PE}Paul E. Ehrlich, \textit{Continuity properties of the injectivity radius function}, Compositio Math.29(1974), 151–178.
		\bibitem{HF} Herbert Federer, \textit{Geometric Measure Theory} Springer, New York, 1969.
		\bibitem{HF1}Herbert Federer, \textit{The singular sets of area minimizing rectifiable currents with codimension one and of area minimizing flat chains modulo two with arbitrary codimension.} Bull. Amer. Math. Soc. 76 (1970), 767–771. 
		\bibitem{HF2}Herbert Federer, \textit{Real flat chains, cochains and variational problems},
		Indiana Univ. Math. J. 24 (1974), 351–407.
		\bibitem{FF}Herbert Federer,  Wendell Fleming, \textit{H. Normal and integral currents}
		Ann. of Math. (2) 72 (1960), 458–520.
		\bibitem{MF1}Michael Freedman, personal communications
		\bibitem{MF}
		Michael Freedman, Matthew Headrick, \textit{Bit threads and holographic entanglement}
		Comm. Math. Phys. 352 (2017), no. 1, 407–438.
		\bibitem{AG} Alfred Gray \textit{Tubes.} Second edition. With a preface by Vicente Miquel. Progress in Mathematics, 221. Birkhäuser Verlag, Basel, 2004.
		\bibitem{RHLS}Robert Hardt, Leon Simon, \textit{
			Area minimizing hypersurfaces with isolated singularities} J. Reine Angew. Math.362(1985), 102–129.
		\bibitem{HLg}Reese Harvey; H. Blaine Lawson, Jr. \textit{Calibrated geometries}. Acta Math. 148 (1982),
		\bibitem{HLf}Reese Harvey; H. Blaine Lawson, Jr. \textit{Calibrated foliations (foliations and mass-minimizing currents)}. Amer. J. Math. 104 (1982), no. 3, 607–633.
		\bibitem{AH}Allen Hatcher, \textit{Algebraic topology}, Cambridge University Press, Cambridge, 2002.
		\bibitem{GL}Gary  Lawlor, \textit{A sufficient criterion for a cone to be area-minimizing}, Mem. Amer. Math. Soc. 91 (1991), no. 446, vi+111 pp.
		\bibitem{JL}John M. Lee, Introduction to smooth manifolds. Second edition. Graduate Texts in Mathematics, 218. Springer, New York, 2013.
		\bibitem{ZL1}Zhenhua Liu, \textit{Homologically area-minimizing surfaces with non-smoothable singularities}, preprint available at \url{arxiv.org/abs/2206.08315}
		\bibitem{ZLc}Zhenhua Liu, \textit{Homologically area-minimizing surfaces that cannot be calibrated}, preprint available at \url{arxiv.org/abs/2310.19860}
		\bibitem{PM}Pertti Mattila, \textit{Geometry of sets and measures in Euclidean spaces Fractals and rectifiability,} Cambridge Studies in Advanced Mathematics, 44. Cambridge University Press, Cambridge, 1995. xii+343 pp. 
		\bibitem{BM}Bruno Martelli, answer to Mathoverflow question \url{https://mathoverflow.net/questions/462979/impossibility-of-realizing-codimension-1-homology-classes-by-embedded-non-orient}
		\bibitem{WM}
		William H. Meeks III, Julie Patrusky, 
		\textit{Representing codimension-one homology classes by embedded submanifolds}, Pacific J. Math. 68 (1977), no. 1, 175–176.
		\bibitem{MW}		Paul Minter and Neshan Wickramasekera
		, \textit{A structure theory for stable codimension 1 integral varifolds with applications to area minimising hypersurfaces mod $p$}, 
		J. Amer. Math. Soc.
		Published electronically: October 3, 2023\bibitem{FM}Frank Morgan, \textit{On the singular structure of two-dimensional area minimizing surfaces in $R^n$.} Math. Ann. 261 (1982), no. 1
		\bibitem{FM2}Frank Morgan, \textit{Examples of unoriented area-minimizing surfaces.} Trans. AMS 283 (1984), 225-237
		\bibitem{FM3}Frank Morgan, \textit{Calibrations modulo }$\nu$. Adv. in Math. 64 (1987), no. 1, 32–50.
		\bibitem{FM4} Frank Morgan, \text{A regularity theorem for minimizing hypersurfaces modulo v}, Trans. Amer. Math. Soc. 297 (1986), no. 1, 243–253.		\bibitem{FM5} Frank Morgan, personal communications.
		\bibitem{NV}Aaron Naber, Daniele Valtorta, \textit{The singular structure and regularity of stationary varifolds}, J. Eur. Math. Soc., Volume 22, Issue 10, 2020
		\bibitem{DN}Dana Mackenzie, \textit{Sufficient conditions for a pair of n-planes to be area-minimizing.} Math. Ann. 279 (1987), no. 1, 161–164.
		\bibitem{BO} Barrett O'Neill, \textit{Semi-Riemannian geometry}. With applications to relativity. Pure and Applied Mathematics, 103. Academic Press, Inc.	
		\bibitem{LS1} Leon Simon, \textit{Lectures on Geometric Measure Theory,} Proceedings for the Centre for Mathematical Analysis, Australian National University, Canberra, 1983.
		\bibitem{RT}Rene Thom, \textit{Quelques propriétés globales des variétés différentiables}, Comment. Math. Helv. 28 (1954), 17–86.
		\bibitem{CW1} C. T. C. Wall, \textit{Determination of the cobordism ring},
		Ann. of Math. (2) 72 (1960), 292–311.
		\bibitem{CW}C. T. C. Wall,\textit{Differential topology}
		Cambridge Stud. Adv. Math., 156
		Cambridge University Press, Cambridge, 2016, viii+346 pp.
		\bibitem{BW1}Brian White, \textit{A regularity theorem for minimizing hypersurfaces modulo p}, Volume 44 of Proc.
		Sympos. Pure Math.,  Amer. Math. Soc., Providence, RI, 1986.
		\bibitem{BW}Brian White, \textit{Stratification of minimal surfaces, mean curvature flows, and harmonic maps},
		J. Reine Angew. Math. 488 (1997), 1–35.
		\bibitem{HW}Hassler Whitney, \textit{Differentiable manifolds.} Ann. of Math. (2) 37 (1936), no. 3, 645–680.
		\bibitem{RY}Robert Young, \textit{Quantitative nonorientability of embedded cycles
		}, Duke Math. J. 167(1): 41-108
		\bibitem{YZa}Yongsheng Zhang, \textit{On extending calibration pairs.} Adv. Math. 308 (2017), 645–670.
		\bibitem{YZj}Yongsheng Zhang, \textit{On realization of tangent cones of homologically area-minimizing compact singular submanifolds.} J. Differential Geom. 109 (2018), no. 1, 177–188. 	 
	\end{thebibliography}
\end{document}